\documentclass[11pt]{article}

\usepackage{algorithm}
\usepackage{algorithmicx}
\usepackage{algpseudocode}
\usepackage{amssymb,amsthm}
\usepackage{graphicx}
\usepackage{xcolor}
\newtheorem{theorem}{Theorem}
\newtheorem{prop}[theorem]{Proposition}
\newtheorem{cor}[theorem]{Corollary}
\newtheorem{remark}{Remark}
\newtheorem{conj}{Conjecture}

\newcommand{\bit}{\begin{itemize}}
\newcommand{\eit}{\end{itemize}}
\newcommand{\ben}{\begin{enumerate}}
\newcommand{\een}{\end{enumerate}}

\newcommand{\bc}{\begin{center}}
\newcommand{\ec}{\end{center}}

\newcommand{\ds}{\displaystyle }

\newcommand{\gl}{\gamma_{\stackrel{}{l}}}
\newcommand{\il}{\iota_{\stackrel{}{l}}}
\newcommand{\al}{\alpha}

\title{Independent Locating-Dominating Sets in Pseudotrees}
\author{J. C\'{a}ceres\footnote{CDTIME and Departamento de Matem\'aticas, Universidad de Almer\'ia (\texttt{jcaceres@ual.es}).} \and I. M. Pelayo\footnote{Departament de Matem\`atiques, Universitat Polit\`ecnica de Catalunya (\texttt{ignacio.m.pelayo@upc.edu}).}}
\date{}

\begin{document}
\maketitle

\begin{abstract}
An ILD-set in a connected graph is a subset $S$ of vertices such that it is both independent and locating-dominating. The independent locating-dominating number of a graph G is the minimum cardinality of an ILD-set set of $G$.

A well-known fact is that any graph with girth at least 5 has an ILD-set, but that is not clear for graphs with girth 3 and 4. 
In this work, we prove that there are graphs with no ILD-sets  for any order $n\geq 9$ and girth 4, also showing some sufficient conditions for a bipartite graph to contain an ILD-set.

Moreover, we focus  our attention on  trees and on unicyclic graphs, showing that every tree and every unicyclic graph contains ILD-sets, whenever in the latter case,  it is twin-free. 
Finally, a number of bounds, realization theorems and algorithms to find an ILD-set in those families of graphs are provided.
\end{abstract}

\noindent
\textbf{Keywords:} Domination, Independence, Neighbour location.

\section{Introduction}\label{sec1:intro}

All the graphs considered are undirected, simple, finite and (unless otherwise stated) connected.
The \emph{order} $n$ and \emph{size} $m$ of a graph $G=(V,E)$ are respectively the number of vertices and the number of edges. Unless otherwise specified, we will set $V=[n]=\{1,\ldots,n\}$.

The \emph{open neighborhood} of a vertex $v$ in a graph $G$ is $\displaystyle N(v)=\{w \in V(G) :vw \in E(G)\}$, and the \emph{closed neighborhood} of $v$ is $N[v]=N(v)\cup \{v\}$. 
If two vertices $u,v \in V(G)$ have the same open or closed neighborhood, i.e. $N(u)=N(v)$ or $N[u]=N[v]$, then they are called \emph{twins}, being \emph{true twins} when they agree in the closed neighborhood. 

Let $W\subseteq V(G)$ be a subset of vertices of  $G$.
The  \emph{open neighborhood} of $W$ is the set $N(W)=\cup_{v\in W} N(v)$.
The subgraph of $G$ induced by $W$, denoted by $G[W]$, has $W$ as vertex set and $E(G[W]) = \{vw \in E(G) : v \in W,w \in W\}$. If $G[W]$ is either a complete graph or the empty graph, then it is said to be either a \emph{clique} or an \emph{independent set} (and also an \emph{stable} set) of $G$.

The \emph{degree} of $v$ is $\deg(v)=|N(v)|$.
The minimum degree  (resp. maximum degree) of $G$ is $\delta(G)=\min\{\deg(u):u \in V(G)\}$ (resp. $\Delta(G)=\max\{\deg(u):u \in V(G)\}$).
If $\deg(v)=1$, then $v$ is said to be a  \emph{leaf} of $G$ and
the set and the number of leaves of $G$ are denoted by ${\cal L}(G)$ and $\ell(G)$, respectively. 
A \emph{support vertex} (resp. \emph{strong support vertex}) is a vertex adjacent to a leaf (resp., to at least two leaves).

As usual, we  denote by $K_n$, $P_n$, $W_n$ and $C_n$, respectively, the complete graph, path, wheel and cycle of order $n$. $K_{r,s}$ denotes the complete bipartite graph whose disjoint sets are $\bar{K}_r$ and $\bar{K}_s$, respectively.
In particular, $K_{1,n-1}$ is the star with $n-1$  leaves.
A split graph, or $S_{r,s}$, is a graph whose vertices can be partitioned into a clique $K_r$ and a maximum independent vertex set $\bar{K}_s$.

Let $G$ be a graph whose vertices represent the rooms and corridors of a facility. Suppose this facility must be guarded against intruders, so it is necessary to install cameras on a set of vertices $D$. To avoid wasting resources, no two adjacent vertices should both host a camera; therefore, the cameras are placed on non-adjacent vertices. Thus, the set of locations with cameras must form an independent set of the graph. The maximum number of independent vertices in $G$ is usually denoted by $\alpha(G)$. The concept of independence in graphs is central to graph theory and is related to many other parameters, such as the vertex cover, the chromatic number, the edge cover, and the clique number, among others.

Now, assume that our cameras can protect not only the vertices on which they are placed but also the adjacent vertices. 
Hence, we require $D$ to \emph{dominate} the graph. 
The cardinality of a minimum dominating set in $G$ is denoted by $\gamma(G)$. 
Note that a set is  maximal independent set if it is both independent and dominating set, and  so, in particular,  $\gamma(G) \leq \alpha(G)$. 
Vertex domination has been studied since the 1950s and is one of the most prolific concepts in graph theory. 
We refer the reader to~\cite{hhs98} for an excellent introduction to the topic.

A set $D\subseteq V(G)$ is  an \emph{independent dominating set} of $G$ if it is both independent and dominating. 
The \emph{independent domination number}, denoted by  $\iota(G)$,   is the  minimum cardinality of an independent dominating set.
This concept was introduced in~\cite{b62}, and in~\cite{o62}, and since then, it has been thoroughly studied in the literature (\cite{al78, bphm77, bc79, c23,cns24,ch74,ch77,f92}), The reader can check~\cite{gh13} for a survey on this topic.

Once an intruder is detected by our cameras, they must be located precisely and without error so that they can be neutralized as soon as possible. That is, depending on which subset of cameras detects the intruder, we must be able to determine exactly where they are located. For this reason, we require $D$ to be locating-dominating.

Introduced in~\cite{s87,s88}, a set $D\subseteq V(G)$ is  a \emph{locating-dominating set} of $G$ if every two vertices  $u,v\in V(G)\setminus D$ verify $$\displaystyle \emptyset\neq N(u)\cap D\neq N(v)\cap D\neq\emptyset.$$ 
The \emph{location-domination number}  $\gamma_{\stackrel{}{l}}(G)$   is the  minimum cardinality of a locating-dominating set. 
A locating-dominating set of cardinality $\gamma_{\stackrel{}{l}}(G)$ is called a \emph{$\gl$-set}. 
Many authors have been interested in this concept (see~\cite{bchl04,bclm11,bcmms07,chmpp13,cms10, hmp14,hmp19}). A survey can be found in~\cite{lhc20}, and a practical detailed list of references on this subject is in~\cite{j26}.

In~\cite{ss18}, and independently in~\cite{wdak17}, decided to combine those concepts and introduced the \emph{independent locating-dominating sets}, the \emph{ILD-sets} for short, as those sets 
which are both independent and locating-dominating. 
Other authors followed the idea (see~\cite{daw20,kkk25,xcmfl25}) and denoted  the \emph{independent location domination number}  $\il(G)$  as the  minimum cardinality of an ILD-set.

Notice that not every graph contains ILD-sets, being the complete graph family the simplest example.
An \emph{ILD graph} is a graph containing ILD-sets.
Otherwise, it is called a non-ILD graph.
Observe also that, for every ILD graph $G$, 
$$\gamma(G) \le \min\{\iota(G),\gamma_{\stackrel{}{l}}(G)\} \le \max\{\iota(G),\gamma_{\stackrel{}{l}}(G)\} \le \il(G)
\le \alpha(G).$$

In this work, we continue the study of ILD-sets and, in addition to presenting several results for general graphs, we focus specifically on pseudotrees, namely, trees and unicyclic graphs. A graph $G$ of order $n$ and size $m$ is called \emph{unicyclic} if $n=m$.
Let  $C_g$ be its unique cycle.
The connected component of $G-E(C_g)$ containing a vertex $v\in V(C_g)$  is denoted by $T_v$ and it is called the \emph{branching tree} of $v$.
The tree $T_v$ is said to be \emph{trivial} if $V(T_v)=\{v\}$.

For additional details and information on basic graph theory we refer the reader to~\cite{clz16}.

The paper is organized as follows: in Section~\ref{nr} we continue with some interesting results found in the literature that will be useful in the rest of the paper and show some new results for general graphs. In Section~\ref{nrt}, we turn out our attention to trees improving a previous bound of $\il$. Unicyclic graphs are studied in Section~\ref{unic}, where the conditions for such a graph to have an ILD-set are proved. In Section~\ref{algo}, we introduce two algorithms for computing an ILD-set both in trees and unicyclic graphs that are based on the previous results. We close the work with a section on conclusions and further work.

\section{General results}\label{nr}
In this section, several relevant results for general graphs are introduced. In~\cite{cms10}, the authors proved the following result for trees:

\begin{theorem} [\cite{cms10}]
If  $T$ is a tree, then $\iota(T)\le \gl(T)$.
\label{iogltrees}
\end{theorem}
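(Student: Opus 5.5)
We would prove Theorem~\ref{iogltrees} by strong induction on the order $n$ of $T$. The idea is to peel off a pendant star at the end of a longest path and compare a $\gl$-set of $T$ with one of the smaller tree.

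\emph{Base cases.} If $T$ has diameter at most $3$, it is a star or a double star, and we check the claim directly. For a star $K_{1,n-1}$ we have $\iota=1\le n-1=\gl$. For a double star the two centres, or one centre together with the leaves of the other, form an independent dominating set of size at most $\gl$, because a locating-dominating set must contain all but at most one leaf at each support vertex.

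\emph{Setup for the inductive step.} Assume $T$ has diameter at least $4$ and fix a diametral path $v_0v_1v_2v_3\cdots$. Every neighbour of $v_1$ other than $v_2$ is a leaf. Let $L$ be this set of leaves and $k=|L|\ge 1$.

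\emph{Normalising $D$.} Let $D$ be a $\gl$-set of $T$. Since two leaves outside $D$ adjacent to $v_1$ would have the same trace $\{v_1\}$ or $\emptyset$, at most one leaf of $L$ lies outside $D$. If $v_1\notin D$, then every leaf of $L$ must be in $D$, since an undominated leaf would have empty trace. In that case we try to exchange one leaf of $L$ for $v_1$, and must check that no two traces coincide afterwards. If the exchange breaks locating, we record this as a separate subcase instead. Either way we may arrange that $|D\cap(L\cup\{v_1\})|\ge k$.

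\emph{Reduction and lifting.} Put $T'=T-(L\cup\{v_1\})$, a tree of smaller order. Let $D'=D\cap V(T')$. If $v_2$ loses its domination or its distinct trace in $T'$ (this happens only when $v_1\in D$ was needed for $v_2$), set $D'=(D\cap V(T'))\cup\{v_2\}$ instead. We then verify that $D'$ is locating-dominating in $T'$ and that $|D'|\le|D|-k$, or $|D'|\le |D|-k+1$ in the subcase where $v_2$ had to be added. By induction, $T'$ has an independent dominating set $I'$ with $|I'|\le\gl(T')\le|D'|$. We extend $I'$ to $T$ as follows:
\begin{itemize}
\item if $v_2\notin I'$, take $I=I'\cup\{v_1\}$, which costs $1$;
\item if $v_2\in I'$, take $I=I'\cup L$, which costs $k$.
\end{itemize}
In each case $I$ is independent and dominating in $T$.

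\emph{Main obstacle.} The hard part is the bookkeeping: the extension cost, $1$ or $k$, must never exceed the savings $|D|-|D'|$. The bad case is $v_2\in I'$ while $v_2$ was added to $D'$, which leaves only $k-1$ savings. To handle it, we would first try to strengthen the induction so that, when $v_2$ is a forced element of $D'$, we may choose $I'$ avoiding $v_2$. If that fails, we would choose $v_3$ instead, exploiting that $v_2$ is then dominated from outside. Failing both, we would fall back on a finer case split according to whether $v_2$ has other pendant neighbours or support children. This is the standard second-level case analysis on a diametral path.
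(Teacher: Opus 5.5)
\textbf{There is a genuine gap: the decisive case is left unresolved, and it is also misdiagnosed.} The paper only quotes this result, so your argument has to stand on its own. As written, the one case that carries the difficulty is just a list of strategies you ``would try''.

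When $v_2$ must be added to $D'$, we have $v_1\in D$ and $v_2\notin D$. If moreover $k=1$ and $v_0\notin D$, then $|D'|=|D|$, so there is no saving at all. Both extensions then cost $1$ and overshoot, whether or not $v_2\in I'$. Since $k=1$ is the typical situation (paths, trees without strong support vertices), this is not a corner case. For $P_5=v_0v_1v_2v_3v_4$ with $D=\{v_1,v_3\}$ you get $D'=\{v_2,v_3\}$ and only $|I|\le 3$.

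Consequences for your proposed repairs:
\begin{itemize}
\item Choosing an $I'$ that avoids $v_2$ cannot help in this case.
\item Your normalising exchange is unnecessary, because $|D\cap(L\cup\{v_1\})|\ge k$ holds for every locating-dominating $D$. Worse, it can only move $D$ into this configuration.
\end{itemize}
In the base case there are two smaller slips. The two centres of a double star are adjacent, so they do not form an independent set. Also, ``all but at most one leaf'' gives only $\gl\ge p+q-2$ when the centres carry $p$ and $q$ leaves. You need the count $|D\cap(L\cup\{v_1\})|\ge k$ at each support vertex.

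\textbf{The missing idea is to delete $v_2$ as well when it would need repair, and to run the induction over forests.} Both $\iota$ and $\gl$ are additive over components, so this costs nothing.

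\emph{Case $v_1\in D$ and $v_2\notin D$.} Pass to $F'=T-(L\cup\{v_1,v_2\})$. No vertex of $F'$ is adjacent to $L\cup\{v_1\}$, and $v_2\notin D$. Hence every vertex of $F'$ outside $D$ keeps its trace. So $D\cap V(F')$ is locating-dominating in $F'$ with at most $|D|-k$ elements. The set $I'\cup\{v_1\}$ is independent, since all neighbours of $v_1$ were deleted. It dominates $L\cup\{v_1,v_2\}$, at cost $1\le k$.

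\emph{Case $v_1\notin D$ or $v_2\in D$.} Deleting $L\cup\{v_1\}$ changes the trace of no vertex outside $D$. So $D\cap V(T')$ is already locating-dominating with at most $|D|-k$ elements. Your two extensions, of cost $1$ or $k$, then close the count.

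This completes the induction without any second-level case analysis at $v_2$ and $v_3$.
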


Similarly, a new inequality involving $\iota(G)$ and $\gl(G)$ is established for general graphs.

\begin{theorem}
\label{iogl}
For every graph  $G$,  $\iota(G)\le 2 ^{\gl(G)-1}$. 
Moreover, the equality is achieved.
\end{theorem}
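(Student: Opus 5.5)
Let $D$ be a $\gl$-set of $G$ with $k=|D|=\gl(G)$. The plan is to build an independent dominating set from $D$ by partitioning the vertices according to their traces on $D$.

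\textbf{Partition by traces.} Since $D$ is locating-dominating, each $v\in V\setminus D$ is determined by its nonempty trace $N(v)\cap D$. Distinct vertices outside $D$ have distinct traces. For every $A\subseteq D$, let
\[
V_A=\{v: N[v]\cap D=A \text{ or } v\in D \text{ with } \dots\},
\]
or, more simply, work with the sets $D'\subseteq D$ directly.

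\textbf{Construction.} Choose a maximal independent subset $I\subseteq D$. Every vertex of $D\setminus I$ is dominated by $I$. Let $U$ be the set of vertices outside $D$ not dominated by $I$. Each $u\in U$ has a nonempty trace $N(u)\cap D\subseteq D\setminus I$, and these traces are pairwise distinct. So $|U|$ is at most the number of nonempty subsets of $D\setminus I$, namely $2^{k-|I|}-1$. Now extend $I$ greedily: take a maximal independent set $J$ of $G[U]$. Then $I\cup J$ is independent, since no vertex of $U$ is adjacent to $I$. It is also dominating: $D\setminus I$ and $V\setminus(D\cup U)$ are dominated by $I$, and $U$ is dominated by $J$. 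Hence
\[
\iota(G)\le |I|+2^{k-|I|}-1 .
\]
With $|I|\ge 1$ (for $k\ge1$) and $i=|I|$, we have $i+2^{k-i}-1\le 2^{k-1}$ for $1\le i\le k$. This holds because $f(i)=i+2^{k-i}$ is convex in $i$, so its maximum over $[1,k]$ is attained at an endpoint: $f(1)=1+2^{k-1}$ and $f(k)=k+1\le 1+2^{k-1}$. This gives the bound. The case $k=0$ occurs only for $K_1$, which is trivial.

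\textbf{Sharpness.} This is the step I expect to be the main obstacle. We need a graph with a $\gl$-set $D$ of size $k$ such that, essentially, $|I|=1$ is forced and all $2^{k-1}-1$ traces appear on an independent set of non-dominated vertices, while every alternative independent dominating set is no smaller. My first attempt is the following. Let $D=\{d_1,\dots,d_k\}$ induce a star centred at $d_1$, or better a clique, so that the independent subsets of $D$ are single vertices. For each nonempty $A\subseteq D$, add a vertex $v_A$ adjacent exactly to $A$, and make the $v_A$ pairwise nonadjacent. Then adjust, for instance by making some $v_A$ true twins or adding pendant structure, so that any independent dominating set must contain many of the $v_A$'s and so that $D$ remains optimal for $\gl$.

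I would first check small cases: $k=1$ ($K_1$ or $K_2$, where equality $1=1$ holds) and $k=2$ (for example $P_4$ or a small graph with $\iota=2$). I would then try to generalize a family where one vertex of $D$ dominates nothing outside $D$ while the other $k-1$ vertices see all $2^{k-1}-1$ subsets of an independent set. Verifying that $\gl$ does not drop below $k$ in that construction is where I expect the difficulty to lie.
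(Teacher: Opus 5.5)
Your upper bound is correct and is the paper's argument. The paper also takes an independent subset of the $\gl$-set (a maximum one, where you use a maximal one). It adds a maximal independent set of the outside vertices not dominated by that subset, and bounds their number by $2^{s}-1$ using distinct traces in the rest of $D$. Your convexity step $i+2^{k-i}-1\le 2^{k-1}$ for $1\le i\le k$ is the correct way to finish. The paper's displayed chain ends with $2^{r+s}-1=2^{\gl(G)-1}$, which is a slip. Also, $k=0$ never occurs: the empty set dominates nothing, so $\gl\ge 1$ for every graph, including $K_1$.

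The genuine gap is the second assertion. You never exhibit a graph attaining equality, so ``the equality is achieved'' is unproved. What is missing is a verification, not a cleverer construction. Your first candidate is precisely the paper's split graph $\Gamma_k$: $D=\{d_1,\dots,d_k\}$ a clique, plus one vertex $v_A$ adjacent exactly to $A$ for each nonempty $A\subseteq D$, with the $v_A$ pairwise nonadjacent. It works with no adjustment. The star variant, by contrast, fails for $k\ge 3$, since $\{d_2,\dots,d_k,v_{\{d_1\}}\}$ is then independent dominating of size $k<2^{k-1}$. Two short counting arguments finish the clique version.

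\begin{enumerate}
\item $\gl(\Gamma_k)=k$. $D$ is locating-dominating. Any locating-dominating set of size $d$ satisfies $n-d\le 2^d-1$. Since $n=k+2^k-1$ and $x\mapsto x+2^x$ is increasing, $d\ge k$.
\item $\iota(\Gamma_k)\ge 2^{k-1}$. An independent dominating set $S$ meets the clique $D$ in at most one vertex, and every $v_A$ has all its neighbours in $D$.
\begin{itemize}
\item If $S\cap D=\emptyset$, all $2^k-1$ vertices $v_A$ must lie in $S$.
\item If $S\cap D=\{d_i\}$, each of the $2^{k-1}-1$ vertices $v_A$ with $d_i\notin A$ has no neighbour in $S$, so it lies in $S$. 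This gives $|S|\ge 2^{k-1}$, attained by $\{d_i\}\cup\{v_A: d_i\notin A\}$.
\end{itemize}
\end{enumerate}

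This lower bound has to range over all independent dominating sets, not only those produced by your $I\cup J$ construction; the case split on $S\cap D$ is what provides that. The correct value is therefore $\iota(\Gamma_k)=2^{k-1}$. The paper's text asserts $\iota(\Gamma_r)=|W_r|=2^r-1$, but that is $\alpha(\Gamma_r)$, and for $r\ge 2$ it would contradict the bound just proved.
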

\begin{proof}
Let $D=U_r \cup U_s$ be a $\gl$-set of $G$ such that $|D|=r+s$ and $U_r=\bar{K}_r$ is a maximum independent set of $D$.
Let $V\setminus D= W_h\cup W_k$ such that $|V\setminus D|=h+k$ and  $N(U_r)\cap (V\setminus D)=W_h$.
Let $S$ a maximal independent set of $W_k$.

Notice that $\Omega=U_r \cup S$ is an independent dominating set of $G$.
Observe also that $k\le 2^s-1$, since $U_s$ is a locating-dominating set of $W_k$.
Hence,  
$$\iota(G)\le |\Omega|=r+|S| \le r+k \le r+2^s-1\le 2^{r+s}-1 =2 ^{\gl(G)-1}.$$

To prove that the equality $\iota(G)= 2 ^{\gl(G)-1}$ holds, consider the following family 
$\{\Gamma_r\}_{r\ge1}$ of split graphs·
For every integer $r\ge1$, $\Gamma_r=(V_r,E_r)$ is the graph such that $V_r=U_r\cup W_r$, $U_r$ is the set $[r]$, 
$W_r=\{w_1,\ldots,w_s\}$ is the set of order $s=2^{r}-1$: ${\cal P}([r])\setminus\{\emptyset\}$, 
$E_r=H_r \cup K_r$, 
$H_r=\{ij \,:\, i,j\in U_r\}$ and
$K_r=\{iw_j: i\in w_j\}$ (see Figure  \ref{fig16}).
Check that $\gl(\Gamma_r)=|U_r|=r$ and $\iota(\Gamma_r)=|W_r|=2^{r}-1$.
\end{proof}

\begin{figure}[hbt]
  \centering
        \includegraphics[width=.95\textwidth]{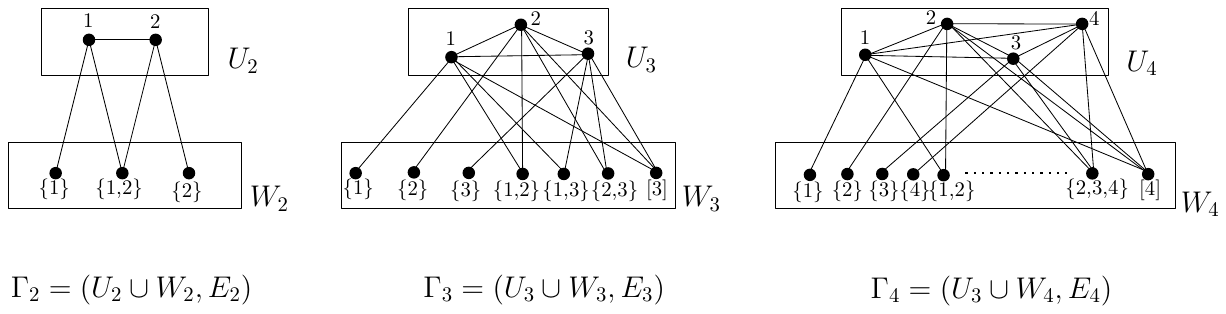}
  \caption{Split graphs $\Gamma_2$, $\Gamma_3$ and $\Gamma_4$.}
  \label{fig16}
\end{figure}

Note that ILD graphs are very dependent of their girth, as the next theorem states.
\begin{theorem}[\cite{cms10}]
\label{ildg5}
If $G$ is a graph with girth $g\ge 5$, then every maximum independent set $S$ is an ILD-set. 
\end{theorem}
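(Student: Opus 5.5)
A maximum independent set $S$ is automatically independent. It is also dominating, because a maximum independent set is maximal: any vertex $v\notin S$ with $N(v)\cap S=\emptyset$ could be added to $S$, contradicting maximality. So the plan is to check only the locating condition. Namely, for distinct $u,v\in V\setminus S$ we must show $N(u)\cap S\neq N(v)\cap S$. I will argue by contradiction, assuming $N(u)\cap S=N(v)\cap S=A$ for some $u\neq v$ outside $S$, with $A\neq\emptyset$ by domination.

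\emph{Step 1: $|A|=1$.} If $A$ contained two distinct vertices $a,b$, then $u a v b u$ would be a closed walk through four distinct vertices ($u,v\notin S$, $a,b\in S$). Hence $G$ would contain a $4$-cycle, contradicting girth at least $5$. So $A=\{a\}$, and $u,v$ are both adjacent to $a$. Moreover $u$ and $v$ are not adjacent, since otherwise $uav$ would be a triangle.

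\emph{Step 2: exchange argument.} Consider $S'=(S\setminus\{a\})\cup\{u,v\}$, which has $|S'|=|S|+1$. We check that it is independent. The vertices $u,v$ are nonadjacent by Step 1. Neither $u$ nor $v$ has a neighbor in $S\setminus\{a\}$, since their only neighbor in $S$ is $a$. Finally, $S\setminus\{a\}$ is independent as a subset of $S$. Thus $S'$ is an independent set larger than $S$, contradicting maximality of $|S|$. Hence no such pair $u,v$ exists, and $S$ is locating-dominating, i.e.\ an ILD-set.

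The only delicate point is Step 1, where girth $\ge 5$ is used twice: to exclude $4$-cycles (forcing $|A|=1$) and to exclude triangles (forcing $uv\notin E$). The argument does not need connectivity. It does need $G$ to have a cycle of length at least $5$ or to be acyclic, which is exactly what the girth hypothesis guarantees, with the convention that forests have infinite girth.
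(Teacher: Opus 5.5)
Your proof is correct and complete. Domination follows from maximality. Girth at least $5$ excludes the $4$-cycle $u\,a\,v\,b$, so $|A|=1$, and it excludes the triangle $u\,a\,v$, so $uv\notin E$. The swap $(S\setminus\{a\})\cup\{u,v\}$ then contradicts the maximum cardinality of $S$.

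The paper gives no proof of its own here; the result is quoted from \cite{cms10}, so there is no in-text argument to compare with. Your exchange argument is a self-contained one, and it makes two points visible. First, the swap genuinely needs $S$ to be maximum rather than merely maximal. For example, in the path $P_5$ with vertices $1,\dots,5$, the maximal independent set $\{2,5\}$ gives vertices $1$ and $3$ the same trace $\{2\}$. Second, in an arbitrary graph, a maximum independent set can fail to be locating only through a $4$-cycle with two opposite vertices in $S$ or a triangle with exactly one vertex in $S$. The first of these is exactly what happens in $C_4$.

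A minor wording point: in your last paragraph, the hypothesis you actually use is that $G$ has no cycles of length $3$ or $4$. That is not the same as saying $G$ ``has a cycle of length at least $5$ or is acyclic''.
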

Hence, in studying whether or not a graph has an ILD-set, one should focus on graphs with girth 3 and 4. Table~\ref{tabla1} shows the $\il$ number of some well-known families of graphs (the solid triangle meaning that there is no ILD-graph in that family), and its relation with other domination and independence parameters.


\begin{table}[hbt] 
\caption{Five parameters of some basic families.}\label{tabla1} 
\begin{center}
\begin{tabular}{cccccc}
\noalign{\hrule height 1.5pt}
 $G$                 & $\gamma$   &  $\iota$   &   $\gl$  &  $\il$  & $\al$  \\  \hline
 $P_n$   &  $\lceil\frac{n}{3}\rceil$     & $\lceil\frac{n}{3}\rceil$ & $\lceil\frac{2n}{5}\rceil$  & $\lceil\frac{2n}{5}\rceil$ & $\lceil\frac{n}{2}\rceil$ \\ 
 $C_4$   &  2  & 2 & 2  & {\bf\Large$\color{blue}\blacktriangledown$} & 2 \\
 $C_n$, $n\ge5$    & $\lceil\frac{n}{3}\rceil$  &  $\lceil\frac{n}{3}\rceil$  & $\lceil\frac{2n}{5}\rceil$  & $\lceil\frac{2n}{5}\rceil$  & $\lfloor\frac{n}{2}\rfloor$ \\
 $K_n$, $n\ge3$    &   1   &  1  &  $n-1$ &  {\bf\Large$\color{blue}\blacktriangledown$}  & 1  \\
 $K_{1,n-1}$       &      1     &   1    &   $n-1$   &  $n-1$    &  $n-1$  \\
 $K_{r,n-r}$, $1<r\le n-r$ &      2     &   2   &    $n-2$    &  {\bf\Large$\color{blue}\blacktriangledown$}   & $n-r$  \\
 $W_{n}$, $n\ge7$   &  1  & 1  & $\lceil\frac{2n-2}{5}\rceil$ & $\lceil\frac{2n-2}{5}\rceil$ & $\lfloor\frac{n}{2}\rfloor$  \\
\noalign{\hrule height 1.5pt}
\end{tabular}
\end{center}
\end{table}


Some important facts to take into account for the rest of the work are the following:
\begin{remark}
{\color{white}.}

\ben

\item
Every graph   that is either  a  complete graph, the cycle $C_4$, a  complete bipartite graph 
or a split graph with true twins is a non-ILD graph.

\item
A split graph $S_{r,s}$ is an ILD graph if and only if it has no true twins.
In such a case, its maximum independent  set $\bar{K}_s$ is its unique ILD-set.

\item
There are  11 non-ILD graphs of order at most 5 (see Figure  \ref{fig15}).
Notice that except the wheel $W_{5}$, each of them is either a complete graph, a  complete bipartite graph 
or a split graph with true twins.

\item
There are  11 non-ILD graphs of order 6, without twins (see Figure  \ref{fig8}).
Notice that all of them have girth $g=3$.

\een
\end{remark}

\begin{figure}[hbt]
  \centering
        \includegraphics[width=.85\textwidth]{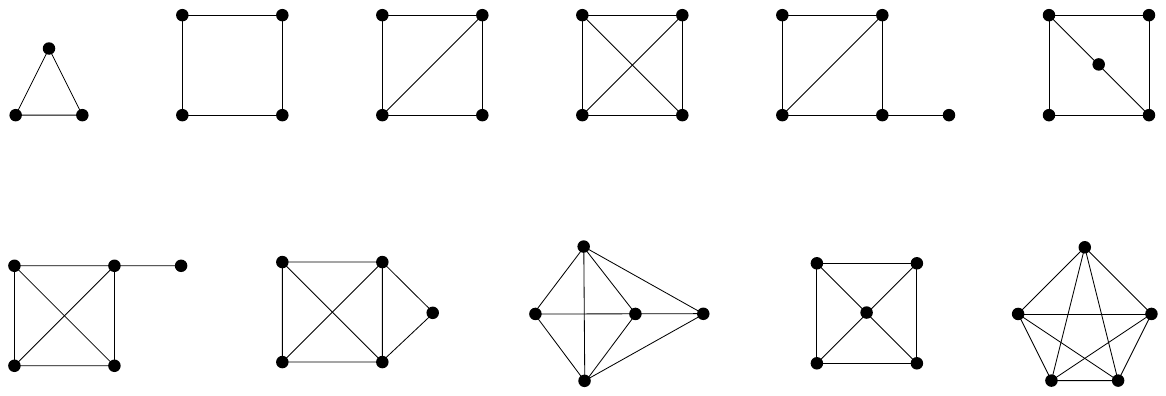}
  \caption{There are  11 non-ILD graphs of order at most 5.}
  \label{fig15}
\end{figure}

\begin{figure}[hbt]
  \centering
        \includegraphics[width=.85\textwidth]{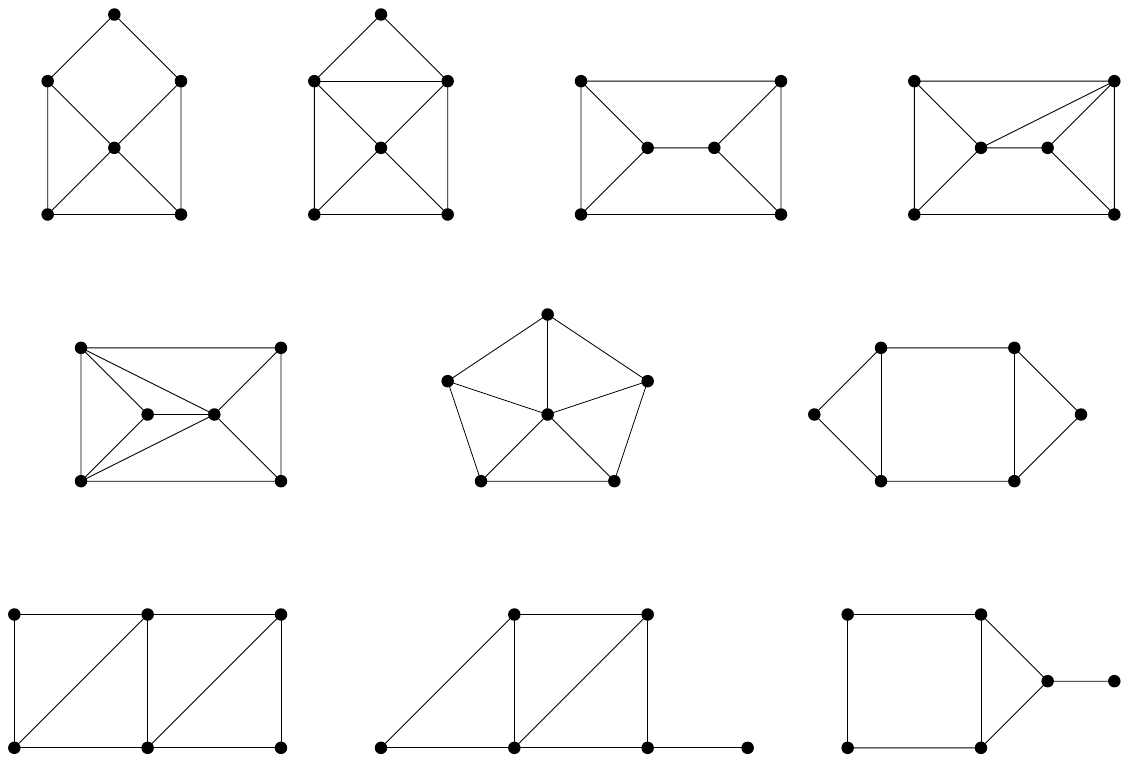}
  \caption{There are  11 non-ILD graphs of order 6, without twins.}
  \label{fig8}
\end{figure}

As we have seen above, for a graph to have an ILD-set is affected by two main factors: its girth and containing  twins or not. Moreover, if $\alpha(G)$ is small enough, then $G$ does not have ILD-sets.

\begin{prop}
Let $G$ be a graph of order $n$ such that $\al(G)=k$.
If $n \ge k + 2^k$, then $G$ is a non-ILD graph.
\label{nild2}
\begin{proof}
Suppose that $G$ is an ILD graph.
Let $S$ be an ILD-set of $G$.
Notice that $|S|\le k$ and $|V-S|\ge n-k$.
Hence, $n-k \le |V-S| \le 2^k -1$, since $S$ is locating-dominating.
In consequence, $n \le k +2^k -1$. 
\end{proof}
\end{prop}

Some conditions for a bipartite graph to have  ILD-sets are showed in the next result.

\begin{prop}
\label{prop:bipartite}
Let $G=(U\cup W,E)$ be a bipartite graph of order $n$, being  $U$ and $W$ the stable parts of $G$.
Then,
\begin{enumerate}
\item $W$ is an ILD-set of $G$ if and only if $U$ has no twins.
\item $U$ is an ILD-set of $G$  if and only if $W$ has no twins.
\item If $G$ has no twins, then $G$ is an ILD graph.
\end{enumerate}
\begin{proof}
To prove {\rm \bf(1)}, it is enough to notice that if  $x, y \in U$, then they are twins
if and only if $N(x)\cap W =N(x)\cap W $.
Item {\rm \bf(2)} is proved similarly and item {\rm \bf(3)}  is a  corollary of both items.
\end{proof}
\label{bipild}
\end{prop}

\begin{figure}[hbt]
  \centering
        \includegraphics[width=0.85\textwidth]{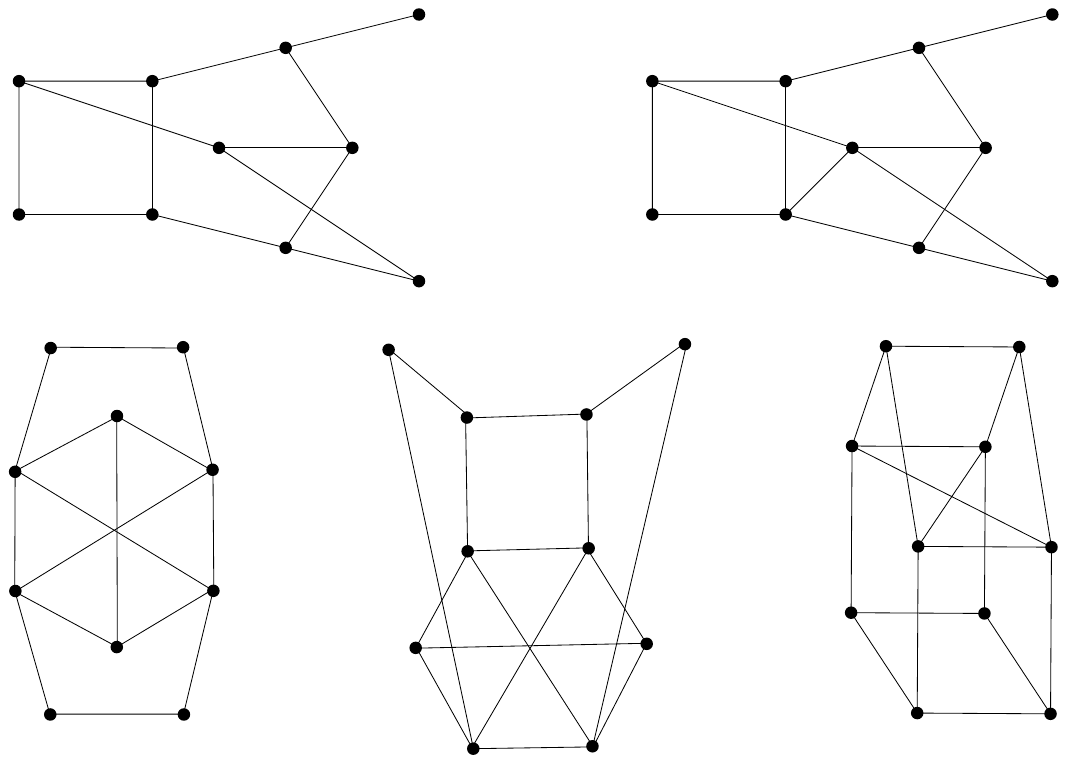}
  \caption{There are five twin-free graphs of order $n=10$  and  girth $g=4$, containing no ILD-set.}
  \label{fig2}
\end{figure}

The next proposition provides the ILD-graphs for twin-free graphs of girth 4, and order  $n\leq 10$. It has been proved by checking all the cases with the aid of computer calculations.

\begin{prop}
Let  $G$ be a twin-free graph of order $n$ and girth $g=4$. 

\begin{enumerate}

\item
If $n<9$, then $G$ is an ILD graph.

\item
If $n=9$ and $G$ is not an ILD graph, then it is isomorphic to the graph  displayed  in Figure \ref{fig4} (left).

\item
If $n=10$ and $G$ is not an ILD graph, then it is isomorphic to some of the graphs displayed  in Figure \ref{fig2}.

\end{enumerate}
\label{noild}

\end{prop}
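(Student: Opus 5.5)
The statement is explicitly a computational classification, so the plan is an exhaustive, computer-assisted verification with the search kept small by structural reductions. First I restrict the candidates. If $G$ is bipartite and twin-free, then Proposition~\ref{bipartite} (item 3) already makes $G$ an ILD graph, so only non-bipartite graphs need checking. Girth $4$ means triangle-free, so the remaining graphs are connected, twin-free, triangle-free and contain both a $4$-cycle and an odd cycle, necessarily of length at least $5$. Proposition~\ref{nild2} gives a quick filter in one direction: a graph with $n\ge \alpha(G)+2^{\alpha(G)}$ is automatically non-ILD. However, triangle-free graphs on $n\le 10$ vertices have $\alpha\ge 3$ or so, which makes this filter vacuous. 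The real work is therefore the positive search.

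For each $n\le 10$, I will generate all connected triangle-free graphs up to isomorphism, for instance with \texttt{geng -ct}, which is feasible since there are only a few tens of thousands at $n=10$. I then discard the bipartite ones, those with girth different from $4$, and those with twins. Twins are tested by comparing the open neighbourhoods $N(u)$ and $N(v)$ for every pair; closed-neighbourhood twins are adjacent twins, which forces a triangle unless the graph is $K_2$, so they cannot occur here.

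For each surviving graph I decide ILD-ness. Every ILD-set is independent and dominating, hence a maximal independent set, so it suffices to enumerate the maximal independent sets, for example by Bron--Kerbosch on the complement. For each such set $S$ I check that the traces $N(v)\cap S$, for $v\notin S$, are pairwise distinct (they are automatically nonempty). The search stops at the first success. The graphs where it never succeeds are collected: I expect none for $n<9$, exactly one for $n=9$, and five for $n=10$, which are then compared up to isomorphism with Figures~\ref{fig4} and~\ref{fig2}.

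The main obstacle is making the exhaustive generation trustworthy rather than the per-graph test, which is trivial at this size. I would cross-check the counts of triangle-free connected graphs against OEIS, and independently verify the exceptional graphs by hand. For those graphs a short argument shows that each maximal independent set leaves two outside vertices with equal traces. Since these graphs are twin-free, the failure must come from the interaction between independence and location. Typically a $4$-cycle $abcd$ forces either $\{a,c\}$ or $\{b,d\}$ to lie in $S$ locally, or else produces a clash, and the hand check should exhibit exactly this.
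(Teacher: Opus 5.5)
Your proposal is correct and is essentially the paper's own argument. The paper justifies this proposition only by an exhaustive computer check of all cases, and your plan---generate the connected triangle-free graphs of order at most $10$, discard the bipartite ones (which are ILD by Proposition~\ref{prop:bipartite} when twin-free), those without a $4$-cycle and those with twins, then test every maximal independent set for the locating property---is a more explicit version of that same computation; the one loose point, your closing heuristic that a $4$-cycle $abcd$ ``forces $\{a,c\}$ or $\{b,d\}$ into $S$,'' is not right as stated (if $a\in S$, then $b,d\notin S$ both contain $a$ in their traces whether or not $c\in S$, so what is forced is a vertex of $S$ off the cycle separating $b$ from $d$), but it plays no role in the proof.
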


To finish this section, we prove that there always exists a non ILD-graph of girth 4 with order $n\geq 9$. 

\begin{prop}
For every integer $n\ge9$, there is  a  non-ILD twin-free graph  $G$ of order $n$ and girth $g=4$. 

\begin{proof}
Clearly, for the case $n=9$ we can consider the graph $H_9$ in Figure~\ref{fig4}. 
It is also not difficult to check that for $n\geq 10$ the graph $H_n$ (same Figure~\ref{fig4}) is not an ILD graph.
\end{proof}
\end{prop}

\begin{figure}[h]
  \centering
        \includegraphics[width=0.85\textwidth]{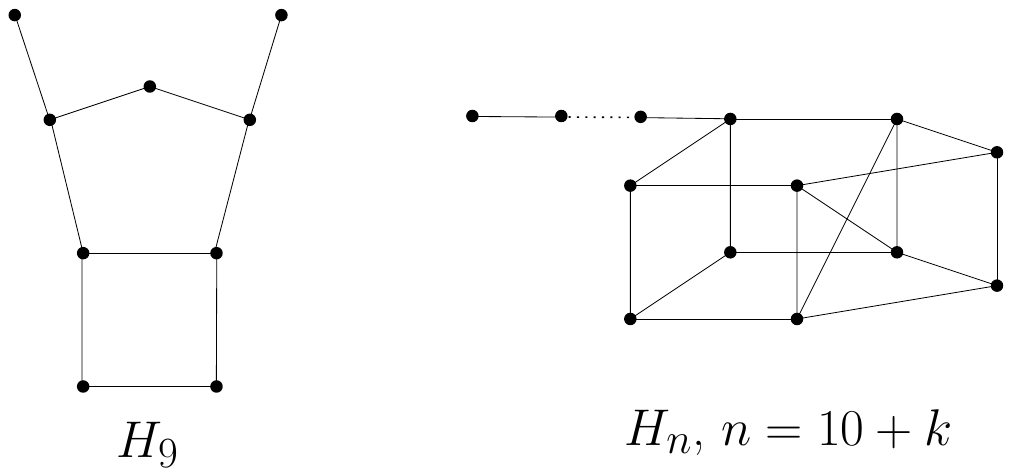}
  \caption{Left: Non-ILD graph of order $n=9$. Right: Non-ILD family of graphs of order $n\ge10$.}
  \label{fig4}
\end{figure}

\section{Trees}\label{nrt}

Any tree has an ILD-set, so it is natural to begin with them to give bounds for, and compute the independent locating-dominating number whenever it is possible. 



The next result proves the values of the independent dominating number of trees with a few vertices and paths of any length.

\begin{prop}[\cite{ss18}]
Let $T$  be a tree of order $n$.
Then,

\ben

\item
If $n\le6$, then  $\gl(T)=\il(T)$, except for the tree displayed in Figure \ref{fig1}, in which case 
$\il(T)=\gl(T)+1=4$.

\item
If $T$ is the path $P_n$, then $\gl(P_n)=\il(P_n)=\lceil\frac{2n}{5}\rceil$.

\een 
\label{small1}
\end{prop}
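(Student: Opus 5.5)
Part (1) is a finite verification, and I will do it by hand rather than by computer. Every tree of order $n\le 6$ lies in a short list: $P_1,\dots,P_6$; the stars $K_{1,n-1}$; the spiders and double stars of order 5 and 6; and the few caterpillars of order 6 with spine $P_3$ or $P_4$. There are 11 trees of order at most 5 and 6 more of order 6.

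For each tree $T$ in the list I compute $\gl(T)$ and $\il(T)$ and show they agree, apart from the exceptional tree of Figure~\ref{fig1}. The lower bound $\gl(T)\le \il(T)$ holds in general, since every ILD-set is locating-dominating. So for each tree it suffices to exhibit one independent locating-dominating set of size $\gl(T)$, checking the size against the standard lower bound $\gl(T)\ge \ell(T)-s(T)$ or a direct count.

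Two observations cut down the search. In a locating-dominating set, at most one leaf per support vertex can be omitted. In many cases one may also take all leaves together with some nonadjacent internal vertices. For the exceptional tree I will argue directly.
\begin{itemize}
\item I exhibit a locating-dominating set of size $3$, which gives $\gl(T)\le 3$.
\item Every independent locating-dominating set is forced to contain four vertices. Independence forbids a support vertex and its leaf from both being in the set. This forces leaves to be chosen, and the remaining uncovered or indistinguishable vertices then need a fourth vertex.
\end{itemize}
Together these give $\il(T)=\gl(T)+1=4$.

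For part (2), I take $\gl(P_n)=\lceil 2n/5\rceil$ as known from the literature; it is the value in Table~\ref{tabla1}. It then suffices to build an independent locating-dominating set of $P_n$ of that size. Label the vertices $1,\dots,n$ and take $S=\{i : i\equiv 2,4 \pmod 5\}$. Within each block of five consecutive vertices, this selects the 2nd and 4th vertices, which are nonadjacent. Consecutive blocks contribute vertices $4$ and $7$, which are also nonadjacent, so $S$ is independent.

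To check that $S$ is locating-dominating, look at each non-member.
\begin{itemize}
\item Vertex $3$ has neighbourhood trace $\{2,4\}$.
\item Vertex $1$ has trace $\{2\}$.
\item Vertex $5$ has trace $\{4\}$.
\item Vertex $6$ has trace $\{7\}$.
\end{itemize}
All these traces are distinct and nonempty.

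The remaining work is the boundary adjustment when $n\not\equiv 0\pmod 5$. I handle the residues $n\equiv 1,2,3,4$ by appending one suitable vertex, such as the last vertex or the second-to-last, so that the set stays independent and locating and its size is $\lceil 2n/5\rceil$.

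The main obstacle is the lower bound in the exceptional case of part (1). One must rule out every independent set of size 3, so the case analysis has to be organized by which support vertices lie in the set.
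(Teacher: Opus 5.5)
The paper does not prove this proposition; it is quoted from \cite{ss18}. So a direct check like yours is the natural route, and the overall plan is sound. However, the boundary step you deferred in part (2) fails as stated for $n\equiv 3 \pmod 5$. Write $n=5k+3$ and $S=\{i\le n : i\equiv 2,4 \pmod 5\}$. Then $|S|=2k+1$, while $\lceil 2n/5\rceil=2k+2$. Since $n-1\in S$, both $n$ and $n-2$ have trace $\{n-1\}$. To separate them you would have to add $n$, $n-2$, or $n-3$ (the only vertex in $N(n)\triangle N(n-2)$). The first two are adjacent to $n-1\in S$, and $n-3=5k$ is adjacent to $5k-1\in S$; for $n=3$ it does not exist. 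So no single vertex can be appended, and you must trade $n-1$ for $\{n-2,n\}$. The set $\{i\le n-3 : i\equiv 2,4 \pmod 5\}\cup\{n-2,n\}$ is independent and has size $2k+2$. It is locating-dominating, since the only new traces are the pairs $\{n-4,n-2\}$ and $\{n-2,n\}$; for $n=3$ it is $\{1,3\}$. For the other residues: when $n\equiv 1$, appending $n$ works. When $n\equiv 2,4$, nothing should be appended, because the truncated set already has $\lceil 2n/5\rceil$ elements and is an ILD-set.

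In part (1), there are $8$ trees of order at most $5$, not $11$, so $14$ in all. More substantively, the bound $\gl(T)\ge \ell(T)-s(T)$ is too weak to certify $\gl$. It gives $0$ for the spider with legs $2,2,1$ and only $1$ for the exceptional tree. There you need $\gl(T)\ge 3$ to conclude $\il(T)=\gl(T)+1$, and your outline never establishes that lower bound. Your own leaf observation gives more. For a support vertex $w$ with leaf set $L_w$, either $w\notin D$ and $L_w\subseteq D$, or $w\in D$ and at most one leaf of $L_w$ is missing. Hence $|D\cap (L_w\cup\{w\})|\ge |L_w|$, and $\gl(T)\ge \ell(T)$ whenever $n\ge 3$. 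This bound equals $\gl(T)$ for every tree of order $3$ to $6$ except $P_6$, which part (2) handles. Finally, part (2) cites $\gl(P_n)=\lceil 2n/5\rceil$, which is itself part of the claim. To make it self-contained, let $|D|=d$. At most $d$ non-members have a singleton trace, and there are at most $2d$ edges between $D$ and $V\setminus D$. Hence $n-d\le d+d/2$, that is, $d\ge 2n/5$.
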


\begin{figure}[h]
  \centering
        \includegraphics[width=.85\textwidth]{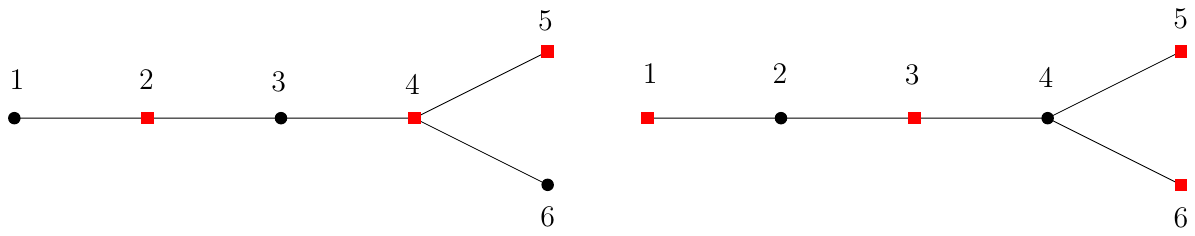}
  \caption{The set $\{2,4,5\}$ is a $\gl$-set, meanwhile the set $\{1,3,5,6\}$ is an $\il$-set.}
  \label{fig1}
\end{figure}

In~\cite{s87,ss18}, the authors proved that for any tree of order $n$,
$$\ds \frac{n}{3} < \gl(T) \le \il(T) \le 2\gl(T) - 1$$
\label{ildtrees1}
However, it is possible to improve the previous upper bound in one unit, being in this case tight.

\begin{theorem}
Let $T$ be a tree  of order $n\ge 3$.
Then,
$$\ds \gl(T) \le \il(T) \le 2\gl(T)-2.$$
\label{nildtrees1}
\begin{proof}
Clearly, as a direct consequence of both definitions, $\ds \gl(T) \le \il(T)$, being this bound tight since 
$\gl(P_n)=\il(P_n)=\lceil\frac{2n}{5}\rceil$ (see Proposition \ref{small1}).

To prove that $\il(T) \le 2\gl(T)-2$, we proceed by induction on  $n$.

According to Proposition \ref{small1}, if either $n\le5$ or $T$ is the path $P_n$, then $\ds \gl(T) = \il(T)$.
Assume thus that $T$ is a tree of order $n\ge6$ containing at least one vertex $u$ of degree $\deg(u)\ge3$.

\begin{figure}[hbt]
  \centering
        \includegraphics[width=1\textwidth]{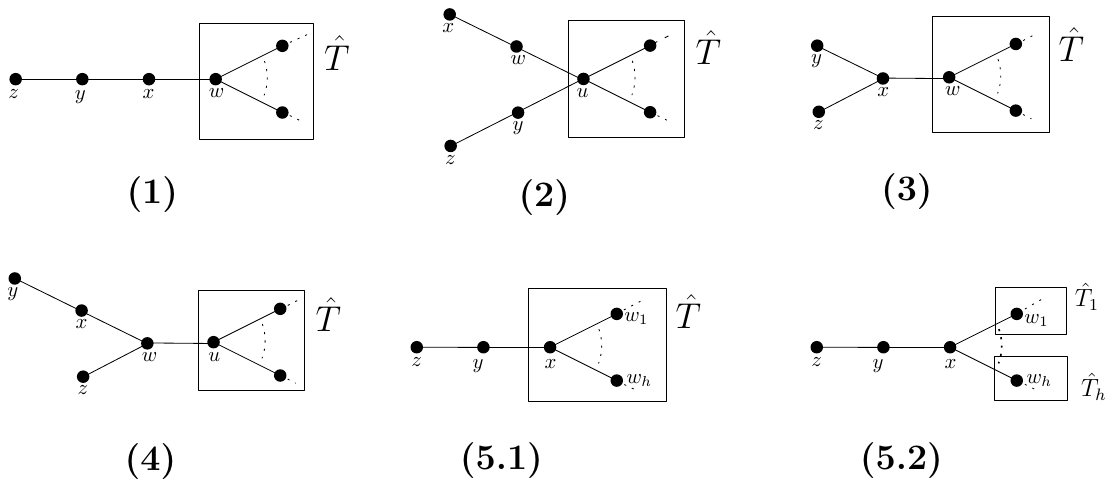}
  \caption{Every tree $T$ contains a subtree $\hat{T}$ such that $\gl(\hat{T}) \le \gl(T) -1$ and 
$\il(T) \le \il(\hat{T}) +2$.}
  \label{fig7}
\end{figure}

We claim that there is a subgraph $\hat{T}$ of $T$ satisfying the following two conditions:

\ben

\item[\bf (i)]
$\gl(\hat{T}) \le \gl(T) -1$

\item[\bf (ii)]
$\il(T) \le \il(\hat{T}) +2$

\een
 
To prove this statement, we distinguish cases.

\noindent
{\bf Case (1):} 
There is a 4-vertex set $\{w,x,y,z\}$ inducing the path $P_4$, such that $\deg(w)\ge2$, $\deg(x)=\deg(y)=2$ and
$\deg(z)=1$. (see Figure \ref{fig7} (1)).
Take the tree $\hat{T}=T-\{x,y,z\}$.

\bit

\item
Let $S$ be a $\gl$-set of $T$.
If $x\not\in S$, (resp., $x\in S$), then take the set $\hat{S}=S\cap V(\hat{T})$ (resp., $\hat{S}=S\cap V(\hat{T})\cup \{w\}$).
Check that in both cases, $\hat{S}$ is an LD-set of $\hat{T}$ and $|\hat{S}|\le |S|-1$.
Hence, $\gl(\hat{T}) \le \gl(T) -1$.

\item
Let $\hat{S}$ be an $\il$-set of $\hat{T}$.
If $w\not\in \hat{S}$, (resp., $w\in \hat{S}$), then take the set $S=\hat{S}\cup \{x,z\}$ 
(resp., $S=\hat{S}\cup \{y\}$).
Check that in both cases, $S$ is an ILD-set of $T$ and $|S|\le |\hat{S}|+2$.
Hence, $\il(T) \le \il(\hat{T}) +2$.

\eit

\noindent
{\bf Case (2):} 
There is a 5-vertex set $\{x,w,u,y,z\}$ inducing the path $P_5$, such that $\deg(w)=\deg(y)=2$, $\deg(x)=\deg(z)=1$ and
$\deg(u)\ge 3$. (see Figure \ref{fig7} (2)).
Take the tree $\hat{T}=T-\{x,w, y,z\}$.

\bit

\item
Let $S$ be a $\gl$-set of $T$.
If $\{w,y\}\cap S = \emptyset$, (resp.,  $\{w,y\}\cap S \neq \emptyset$), then take the set $\hat{S}=S\cap V(\hat{T})$ (resp., $\hat{S}=S\cap V(\hat{T})\cup \{u\}$).
Check that in both cases, $\hat{S}$ is an LD-set of $\hat{T}$ and $|\hat{S}|\le |S|-1$.
Hence, $\gl(\hat{T}) \le \gl(T) -1$.

\item
Let $\hat{S}$ be an $\il$-set of $\hat{T}$.
Take the set $S=\hat{S}\cup \{x,z\}$.
Check that  $S$ is an ILD-set of $T$ and $|S|= |\hat{S}|+2$.
Hence, $\il(T) \le \il(\hat{T}) +2$.

\eit

\noindent
{\bf Case (3):} 
There is a 4-vertex set $\{w,x,y,z\}$ inducing the star $K_{1,3}$, such that $\deg(y)=\deg(z)=1$, $\deg(x)=3$ and
$\deg(w)\ge 2$. (see Figure \ref{fig7} (3)).
Take the tree $\hat{T}=T-\{x,y,z\}$.


\bit

\item
Let $S$ be a $\gl$-set of $T$.
If $x\not\in S$, (resp., $x\in S$), then take the set $\hat{S}=S\cap V(\hat{T})$ (resp., $\hat{S}=S\cap V(\hat{T})\cup \{w\}$).
Check that in both cases, $\hat{S}$ is an LD-set of $\hat{T}$ and $|\hat{S}|\le |S|-1$.
Hence, $\gl(\hat{T}) \le \gl(T) -1$.

\item
Let $\hat{S}$ be an $\il$-set of $\hat{T}$.
Notice that for every ILD-set $S$ of $T$, $|\{x,y,z\}\cap S|\ge 2$. 
Take the set $S=\hat{S}\cup \{x,y\}$
Check that  $S$ is an ILD-set of $T$ and $|S|=|\hat{S}|+2$.
Hence, $\il(T) \le \il(\hat{T})+2$.

\eit

\noindent
{\bf Case (4):} 
There is a 4-vertex set $\{y,x,w,z\}$ inducing the path $P_{4}$, such that $\deg(y)=\deg(z)=1$, $\deg(x)=2$ and
$\deg(w)=3$. (see Figure \ref{fig7} (4)).
Take the tree $\hat{T}=T-\{y,x,w,z\}$.

\bit

\item
Let $S$ be a $\gl$-set of $T$.
If $w\not\in S$, (resp., $w\in S$), then take the set $\hat{S}=S\cap V(\hat{T})$ (resp., $\hat{S}=S\cap V(\hat{T})\cup \{u\}$).
Check that in both cases, $\hat{S}$ is an LD-set of $\hat{T}$ and $|\hat{S}|\le |S|-1$.
Hence, $\gl(\hat{T}) \le \gl(T) -1$.

\item
Let $\hat{S}$ be an $\il$-set of $\hat{T}$.
Take the set $S=\hat{S}\cup \{x,z\}$
Check that  $S$ is an ILD-set of $T$ and $|S|=|\hat{S}|+2$.
Hence, $\il(T) \le \il(\hat{T})+2$.

\eit


\noindent
{\bf Case (5):} 
There is a 3-vertex set $\{x,u,z\}$ inducing the path $P_{3}$, such that $\deg(z)=1$, $\deg(y)=2$,
$\deg(x)=h+1\ge 3$ and $N(x)=\{y,w_1,\ldots,w_h\}$. (see Figures \ref{fig7} (5.1) and (5.2)).


Let $S$ be a $\gl$-set of $T$.
Notice that $|\{y,z\}\cap S|\ge 1$.
We distinguish two subcases.


\noindent
{\bf Subcase (5.1):}  
$x\in S$.
Take the  tree $\hat{T}=T-\{y,z\}$
(see Figure \ref{fig7} (5.1)).
 
\bit

\item
Take the set $\hat{S}=S\cap V(\hat{T})$.
Check that $\hat{S}$ is an LD-set of $\hat{T}$ and $|\hat{S}|\le |S|-1$.
Hence, $\gl(\hat{T}) \le \gl(T) -1$.

\item
Let $\hat{S}$ be an $\il$-set of $\hat{T}$.
Take the set $S=\hat{S}\cup \{z\}$
Check that  $S$ is an ILD-set of $T$ and $|S|=|\hat{S}|+1$.
Hence, $\il(T) \le \il(\hat{T})+2$.

\eit


\noindent
{\bf Subcase (5.2):}  
$x\not\in S$.
For every integer  $j \in  [h]$, let $\hat{T}_j$ denote the tree obtained from $T-\{x, y,z\}$ that contains vertex $w_j$.
Take the   forest $\ds \hat{T} = T-\{x, y,z\} = \bigcup_{j=1}^{h} \hat{T}_j$
(see Figure \ref{fig7} (5.2)).

\bit

\item
Take the set $\hat{S}=S\cap V(\hat{T})$.
Check that $\hat{S}$ is an LD-set of $\hat{T}$ and $|\hat{S}|\le |S|-1$.
Hence, $\gl(\hat{T}) \le \gl(T) -1$.

\item
Let $\hat{S}$ be an $\il$-set of $\hat{T}$.
Take the set $S=\hat{S}\cup \{x,z\}$
Check that  $S$ is an ILD-set of $T$ and $|S|=|\hat{S}|+2$.
Hence, $\il(T) \le \il(\hat{T})+2$.

\eit
Thus, in all cases:
$\il(T) \le \il(\hat{T}) +2 \le 2\gl(\hat{T})-2+2= 2\gl(\hat{T})  \le 2(\gl(T)-1) =2\gl(T)-2$.
\end{proof}
\end{theorem}

\begin{figure}[hbt]
  \centering
        \includegraphics[width=.85\textwidth]{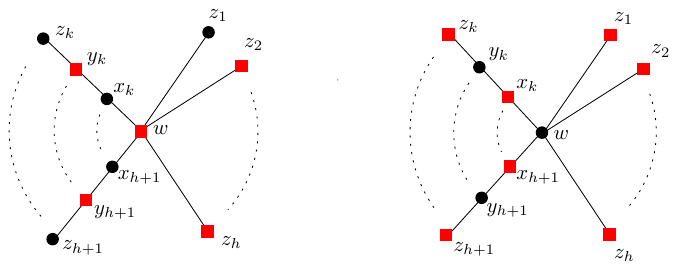}
  \caption{Tree of order $n=3k-2h+1$ with $k$ leaves, with  $2 \le h \le k$. 
  Left: the set of squared red  vertices  is a $\gl$-set.
  Right: the set of squared  red vertices is an $\il$-set.}
  \label{fig5}
\end{figure}

There exist trees for the whole range of values for $\gl$ and $\il$ of the previous theorem.

\begin{theorem}
Let $r,s$ a pair of integers such that $2 \le r \le s \le 2r-2$.
Then, there is a tree $T$ such that $\gl(T)=r$ and $\il(T)=s$.
\label{nildtrees2}
\begin{proof}
Let $h,k$ a pair of integers such that $2 \le h \le k$.
Take the tree $T$ of order $n=3k-2h+1$ with $k$ leaves, displayed in Figure \ref{fig5}.
Notice that the set $S_1=\{w\} \cup \{z_2, \ldots, z_h\} \cup \{y_{h+1}, \ldots, z_k\}$ is a $\gl$-set of $T$.
Observe also that  the set $S_2=\{z_1, \ldots, z_h\} \cup \{x_{h+1}, \ldots, x_k\} \cup \{z_{h+1}, \ldots, z_k\}$ is a 
$\il$-set of $T$.

Then, $\gl(T)=|S_1|=k$ and $\il(T)=|S_2|=2k-h$.
Take $h=2r-s$ and $k=r$.
Finally, check that $2 \le h \le k$ if and only if $2 \le r \le s \le 2r-2$.
\end{proof}
\end{theorem}

If a tree $T$ verifies some conditions, then we can obtain a better upper bound for $\il(T)$ than the one in Theorem~\ref{nildtrees1}.
\begin{prop}
\label{nildtrees4}
Let  $T$ be a tree of order $n \ge6$.
Then, $\il(T)\le 2\gl(T)-3$, whenever some of the following conditions holds.

\begin{enumerate}
\item 
$T$ contains a  strong support vertex $w$ such that $N(w)\cap {\cal L}(T)\ge3$.

\item
$T$ is a twin-free, i.e., if it has no strong support vertices. 

\end{enumerate}
\end{prop}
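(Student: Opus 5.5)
The plan is to run the induction of Theorem~\ref{nildtrees1} again, but to start it from a tree in which we have already saved one unit. In both situations I will find a subtree $\hat{T}$ of $T$ satisfying $\gl(\hat{T})\le \gl(T)-1$ and $\il(T)\le \il(\hat{T})+1$. Applying Theorem~\ref{nildtrees1} to $\hat{T}$ then gives
$$\il(T)\le \il(\hat{T})+1\le 2\gl(\hat{T})-2+1\le 2\gl(T)-3.$$
This requires $\hat{T}$ to have order at least $3$, which holds since $n\ge 6$. If $\hat{T}$ turns out to be a forest, Theorem~\ref{nildtrees1} is applied componentwise, as in Subcase (5.2).

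\textbf{Condition (1).} Let $w$ be adjacent to leaves $z_1,\dots,z_t$ with $t\ge 3$, and put $\hat{T}=T-z_t$. Any LD-set $S$ of $T$ contains at least $t-1$ of the $z_i$. So we may assume $z_t\in S$: if $z_t\notin S$, swap it with some $z_i\in S$, using that the $z_i$ are twins. If $w\notin S$, then all $z_i$ lie in $S$. In either case $\hat{S}=S\setminus\{z_t\}$, or $(S\setminus\{z_t\})\cup\{w\}$ after a small repair, is an LD-set of $\hat{T}$ of size at most $|S|-1$. Having $t\ge3$ is exactly what keeps $\hat{T}$ with at least two leaves on $w$, so the repair stays cheap.

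For the other inequality, take an $\il$-set $\hat{S}$ of $\hat{T}$. Since $z_1,\dots,z_{t-1}$ are twins in $\hat{T}$ and $t-1\ge 2$, at least one of them lies in $\hat{S}$. Hence $w\notin\hat{S}$, and then in fact all of $z_1,\dots,z_{t-1}$ lie in $\hat{S}$. Therefore $S=\hat{S}\cup\{z_t\}$ is independent, and it is locating-dominating because $\hat{S}$ already handled every vertex other than $z_t$. This gives $\il(T)\le\il(\hat{T})+1$. Condition (1) is the easy case.

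\textbf{Condition (2).} Now $T$ has no strong support vertices. I would reuse the case analysis of Theorem~\ref{nildtrees1}, with Case (3) excluded, and look for a configuration costing only $+1$. The natural candidate is Subcase (5.1): a leaf $z$ hanging from $y$ with $\deg(y)=2$, whose other neighbour $x$ lies in a $\gl$-set. There, removing $\{y,z\}$ and adding $z$ back gives $+1$.

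Take a longest path in $T$. Its end segment is either as in Case (1), which costs $+2$, or the end leaf is adjacent to a support vertex of degree $2$ whose neighbour has degree at least $3$, as in Case (5). I would redo the analysis by choosing the $\gl$-set so that this neighbour $x$ is in it; this should be possible by exchanging $y$ for $x$ when $\deg(x)\ge3$.

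The main obstacle is the remaining configurations: Cases (1), (2), (4) and Subcase (5.2). There I would try a two-step reduction. I would apply the $+2$ reduction once and argue that in a twin-free tree the resulting $\hat{T}$ has $\gl(\hat{T})\le\gl(T)-2$, or else that $\hat{T}$ falls into a $+1$ configuration. Failing that, I would strengthen the inductive hypothesis to twin-free trees directly, checking base cases $n=6,7$ by hand. The tree of Figure~\ref{fig1} shows that the twin-free hypothesis is essential.
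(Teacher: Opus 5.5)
Your argument for condition (1) is correct and is the paper's argument. You delete one of the $t\ge 3$ leaves at $w$, obtain $\gl(\hat T)\le\gl(T)-1$ and $\il(T)\le\il(\hat T)+1$, and apply Theorem~\ref{nildtrees1} to $\hat T$. No ``repair'' is needed: $S\setminus\{z_t\}$ is always an LD-set of $\hat T$.

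Condition (2), however, is not proved; there is a genuine gap. Your only $+1$ configuration is Subcase (5.1), and the exchange you propose to put $x$ into a $\gl$-set does not work in general. Take the twin-free spider with centre $x$ and legs $x\,y_i\,z_i$ ($i=1,2,3$). Every LD-set meets each $\{y_i,z_i\}$, so $\gl=3$ and no $\gl$-set contains $x$. Separately, swapping $y$ for $x$ leaves $z$ undominated whenever $z\notin S$. Cases (1), (2), (4) and (5.2) are left as intentions. The first alternative of your two-step plan, $\gl(\hat T)\le\gl(T)-2$ after a $+2$ step, is false already for paths: deleting three end vertices of $P_n$ usually lowers $\gl$ by only one.

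Your fallback is exactly the paper's route: induct inside the class of twin-free trees, combining the $(+2,-1)$ reduction of Theorem~\ref{nildtrees1} with the hypothesis $\il(\hat T)\le 2\gl(\hat T)-3$. Its crux is showing that $\hat T$ again satisfies the hypotheses. That means $\hat T$ is twin-free (or satisfies (1)) with order at least $6$, and in Subcase (5.2) the forest's components must each be controlled. For instance, in Case (1) with $\deg(w)=2$, the vertex $w$ becomes a leaf of $\hat T$ and may create a strong support vertex. You do not address this, and the paper's own sketch passes over it as well.

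A short way to close (2) without induction is as follows. By Proposition~\ref{prop:bipartite}, the smaller colour class of a twin-free tree is an ILD-set, so $\il(T)\le\lfloor n/2\rfloor$. Also $\gl(T)\ge\lfloor n/3\rfloor+1$. Together these give $\il(T)\le 2\gl(T)-3$ for every $n\ge 6$ except $n=8$. For $n=8$ with $\gl(T)=3$, take a $\gl$-set $S$. At most $3$ outside vertices have exactly one neighbour in $S$. Let $B$ be the set of outside vertices with at least two neighbours in $S$. Since $T$ is acyclic, $|B|+|E(T[S])|\le 2$. The five outside vertices then force $|B|=2$ and $|E(T[S])|=0$, so $S$ is independent and $\il(T)=3$.
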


\begin{proof}
We proceed by induction on $n$.
According to Proposition \ref{small1}, for $n=6$,  $\il(T)\le \gl(T) \le  2\gl(T)-3$.
Let  $T$ be a tree of order $n \ge7$.
We distinguish cases.

\begin{enumerate}
\item 
Let $v\in N(w)\cap {\cal L}(T)$.
Take the subtree $\hat{T}$ of $T$ such that $V(\hat{T})=V(T)-v$.
Check that $\gl(\hat{T})=\gl(T)-1$ and $\il(\hat{T})=\il(T)-1$.

Thus,
$\il(T) = \il(\hat{T}) +1 \le 2\gl(\hat{T})-2+1= 2\gl(\hat{T})-1  = 2(\gl(T)-1)-1 =2\gl(T)-3$.

\item
We proceed by induction on $n$.
According to Proposition \ref{small1}, for $n=6$ this statement is true.
Let  $T$ be a tree of order $n \ge7$.
Proceeding as in the proof of Theorem \ref{nildtrees1}, we conclude that there is a subgraph $\hat{T}$ of $T$ satisfying the following two conditions:

\ben

\item[\bf (i)]
$\gl(\hat{T}) \le \gl(T) -1$

\item[\bf (ii)]
$\il(T) \le \il(\hat{T}) +2$

\een

\end{enumerate}

Thus,
$\il(T) = \il(\hat{T}) +2 \le 2\gl(\hat{T})-3+2= 2\gl(\hat{T})-1  = 2(\gl(T)-1)-1 =2\gl(T)-3$.
\end{proof}

\begin{figure}[hbt]
  \centering
        \includegraphics[width=.95\textwidth]{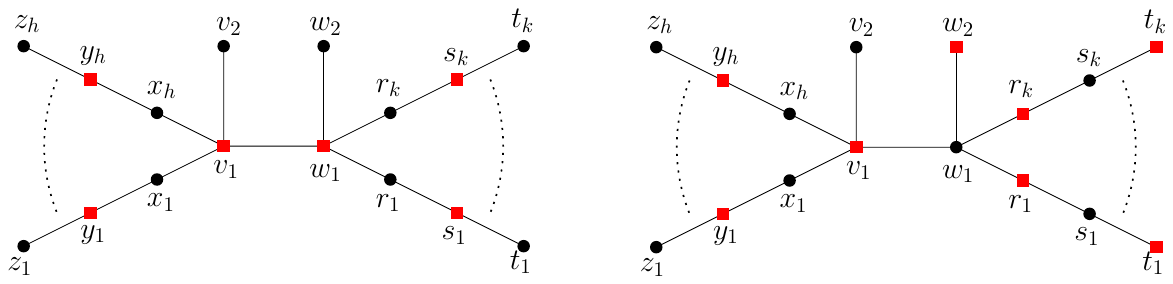}
  \caption{Tree of order $n=3h+3k+4$ with $h+k+2$ leaves, with $1\le h$ and $2 \le k$. 
  Left: the set of squared  vertices  is a $\gl$-set.
  Right: the set of squared  red vertices is an $\il$-set.}
  \label{fig12}
\end{figure}

Again, there are examples for this type of trees ranging from the spectrum of Proposition~\ref{nildtrees4}.

\begin{prop}
Let $r,s$ a pair of integers such that $2 \le r \le s \le 2r-3$.
Then, there is a twin-free tree $T$ such that $\gl(T)=r$ and $\il(T)=s$ (see Figure~\ref{fig12}).
\end{prop}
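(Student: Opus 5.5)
The plan is to give an explicit twin-free tree for each admissible pair $(r,s)$ with $2\le r\le s\le 2r-3$, modelled on the tree of Figure~\ref{fig12}: order $n=3h+3k+4$, with $h+k+2$ leaves, $h\ge1$, $k\ge2$. I would compute both parameters as functions of $h,k$ and then invert the map $(h,k)\mapsto(r,s)$, as in the proof of Theorem~\ref{nildtrees2}.

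The tree is built from a short central path carrying two kinds of pendant gadgets. There are $h$ gadgets of the first type, pendant paths on which a minimum LD-set and a minimum ILD-set cost the same. There are $k$ gadgets of the second type, pendant paths $x_iy_iz_i$ attached to centre vertices. For these, an LD-set pays $1$ by using the attachment vertices, which are adjacent and hence forbidden in an ILD-set, whereas an ILD-set must pay $2$. No vertex carries two leaves, so the tree is twin-free.

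\textbf{Step 1 (explicit sets).} Exhibit an LD-set $S_1$ and an ILD-set $S_2$, as in Figure~\ref{fig12}. Then check domination, location (distinct nonempty traces on each vertex outside the set) and, for $S_2$, independence. This gives upper bounds of the shape $\gl(T)\le a h+k+c$ and $\il(T)\le a h+2k+c'$.

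\textbf{Step 2 (lower bounds).} Argue gadget by gadget.
\begin{itemize}
\item Each pendant gadget forces a fixed number of vertices of any LD-set inside its closed region. The reason is that the leaf and its neighbour must be dominated and distinguished.
\item For an ILD-set, each $x_iy_iz_i$ gadget forces two vertices. If only one vertex of $\{x_i,y_i,z_i\}$ is in $S$, then either the leaf is undominated, or two vertices have the same trace, or the set is not independent.
\item The central part forces a constant number of further vertices.
\end{itemize}
Summing these contributions over disjoint regions matches the upper bounds, so that
$$\gl(T)=r(h,k),\qquad \il(T)=s(h,k),\qquad s-r=k-1 \text{ or similar.}$$

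\textbf{Step 3 (parametrization).} Solve for $h,k$ in terms of $r,s$ and verify that the constraints $h\ge1$, $k\ge2$ are equivalent to $2\le r\le s\le 2r-3$. Small or boundary cases not covered, such as $s=r$ (where $2r-3\ge r$ forces $r\ge3$), can be handled by a path $P_n$ with $\lceil 2n/5\rceil=r$, using Proposition~\ref{small1}; paths are twin-free.

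I expect Step 2 to be the main obstacle, in particular the lower bound for $\il$. Optimal sets may place vertices at the junctions between gadgets, which breaks the gadget-by-gadget counting. I would handle this by assigning each junction vertex to exactly one region, and by checking the local case analysis at each junction: whether or not the central vertex lies in the set.
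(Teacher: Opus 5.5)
Apart from the fact that you never actually fix the tree or compute $r(h,k)$ and $s(h,k)$, the plan cannot work, because the statement itself fails at the top of the range. If $T$ is a twin-free tree of order $n\ge 3$, then by Proposition~\ref{prop:bipartite} each colour class of $T$ is an ILD-set, so $\il(T)\le\lfloor n/2\rfloor$. On the other hand, $n\le 3\gl(T)-1$ for every tree. This is the bound $n/3<\gl(T)$ recalled before Theorem~\ref{nildtrees1}. Directly: for an LD-set $D$, at most $|D|$ outside vertices have a one-element trace. The set $B$ of outside vertices with larger traces satisfies $2|B|\le |D|+|B|-1$, because the $D$--$B$ edges form a forest. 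Hence $\il(T)\le\lfloor(3\gl(T)-1)/2\rfloor$, which is strictly less than $2\gl(T)-3$ as soon as $\gl(T)\ge 6$. For example, $(r,s)=(6,9)$ satisfies $2\le r\le s\le 2r-3$, yet a twin-free tree with $\gl=6$ has at most $17$ vertices, so $\il\le 8$. So the equivalence you want in Step 3 is impossible for any family whatsoever. The same objection applies to the paper's proof, which consists only of the family in Figure~\ref{fig12}. Any correct version must have $s\le\lfloor(3r-1)/2\rfloor$.

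The error that hides this is in Step 2. A pendant path $x_iy_iz_i$ hanging from a vertex $c$ (with $x_i\sim c$) does not cost two vertices of every ILD-set. If $c\in S$, then $y_i$ alone suffices: $x_i$ gets trace $\{c,y_i\}$, $z_i$ gets trace $\{y_i\}$, and $y_i\not\sim c$. This is exactly the same cost as for an LD-set. The extra vertex is paid only on paths whose attachment vertex is left out of $S$. Since attachment vertices induce a forest, an ILD-set can keep a large part of them.

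Concretely, take adjacent vertices $u,v$, each with one pendant leaf, and hang $h$ such paths on $u$ and $k$ on $v$. This tree is twin-free, with order $3h+3k+4$ and $h+k+2$ leaves. Here $\gl=h+k+2$: it is attained by $u$, $v$ and the middle vertices $y_i$, and each side needs at least $1+h$, resp. $1+k$, vertices. An ILD-set must omit $u$ or $v$, and then pays the leaf plus two per path on that side, so $\il=h+k+2+\min\{h,k\}$. Together with paths for $s=r$, this realizes every pair with $3\le r\le s\le r+\lfloor (r-2)/2\rfloor$. That range includes $(4,5)$ (take $h=k=1$). Your path fallback would miss this pair, since any family with $h\ge1$, $k\ge2$ has $n\ge 13$ and hence $\gl\ge 5$.
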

\label{nildtrees3}


\section{Unicyclic graphs}\label{unic}

Adding an extra edge to a tree, one obtains a unicyclic graph, but that edge changes everything. 
For example, unlike trees, not every unicyclic graph is an ILD graph. 
Figure \ref{fig6} shows the  smallest non-ILD unicyclic  graphs of girths 3 and 4, respectively.

\begin{figure}[hbt]
  \centering
        \includegraphics[width=0.85\textwidth]{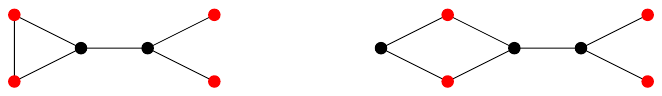}
  \caption{Unicyclic graphs of girth at most 4, containing no ILD-set.}
  \label{fig6}
\end{figure}

As an immediate consequence of the fact that in every twin-free bipartite graph (see Proposition \ref{bipild}), both stable sets are ILD-sets, the following result holds.

\begin{cor}
Let $v$ be a vertex of  a  twin-free bipartite graph $G$.
Then, there is an ILD-set of $G$ containing (resp., not containing) $v$.
\label{tft}
\end{cor}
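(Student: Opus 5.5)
The plan is to read the corollary directly off Proposition~\ref{bipild}. Let $G=(U\cup W,E)$ be a twin-free bipartite graph with stable parts $U$ and $W$, and let $v\in V(G)$.

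\begin{itemize}
\item Since $G$ has no twins at all, in particular no two vertices of $U$ are twins and no two vertices of $W$ are twins.
\item By items (1) and (2) of Proposition~\ref{bipild}, both $W$ and $U$ are therefore ILD-sets of $G$.
\item Because $\{U,W\}$ is a partition of $V(G)$, the vertex $v$ lies in exactly one of the two parts. Say $v\in U$; the case $v\in W$ is symmetric.
\item Then $U$ is an ILD-set of $G$ containing $v$, and $W$ is an ILD-set of $G$ not containing $v$.
\end{itemize}

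The only point to check is that the bipartition is well defined and that both parts are nonempty. Under the paper's standing assumption that graphs are connected, the bipartition is unique. Let $n$ be the order of $G$.

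\begin{itemize}
\item If $n\ge 2$, both parts are nonempty.
\item If $n=1$, $G$ has no edges. Taking $W=\emptyset$, the set $\{v\}$ is an ILD-set containing $v$, since there are no vertices outside it to dominate or separate. The set $\emptyset$ is vacuously an ILD-set not containing $v$.
\item If $n=2$, $G=K_2$, whose two vertices are true twins. This graph is excluded by the hypothesis, so it needs no separate treatment.
\end{itemize}

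No other obstacle is expected. All of the work was already done in Proposition~\ref{bipild}, where twin-freeness of a part is equivalent to the other part being locating-dominating. Each part is independent because $G$ is bipartite, and each part dominates the other because $G$ is connected with no isolated vertices when $n\ge2$.
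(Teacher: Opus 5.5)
Your main argument is the paper's argument. By Proposition~\ref{bipild}, in a twin-free bipartite graph both stable parts $U$ and $W$ are ILD-sets, and $v$ lies in exactly one of them. You also correctly note that this needs each part to dominate the other, which holds because $G$ is connected with $n\ge 2$; the paper leaves this hypothesis of Proposition~\ref{bipild} tacit.

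The one step that fails is your treatment of $n=1$. The empty set is not an ILD-set of $K_1$: the vertex $v$ lies outside $\emptyset$ and has no neighbour in it, so $\emptyset$ is not dominating. The requirement $N(u)\cap D\neq\emptyset$ applies to every single vertex outside $D$, not only to pairs, and here that set of vertices is $\{v\}\neq\emptyset$. In fact the only dominating set of $K_1$ is $\{v\}$. So $K_1$, which is trivially bipartite and twin-free, has no ILD-set avoiding $v$, and the ``not containing'' half of the corollary is false for it. Item (1) of Proposition~\ref{bipild} fails there too, since $U=\{v\}$ is twin-free but $W=\emptyset$ does not dominate it. You should not claim the degenerate case is covered. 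Instead, say the statement must be read for $n\ge 2$, which effectively means $n\ge 3$ because, as you note, $K_2$ has true twins. With that restriction your proof is complete.
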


And this result leads us to the following existence theorem for unicyclic ILD-graphs.

\begin{theorem}
\label{th:unicyclic}
Every twin-free unicyclic graph is an ILD graph.
\begin{proof}
According to Theorem \ref{ildg5} and Proposition \ref{bipild}, if $g\ge4$, then this result holds.
Let $G$ be a unicyclic graph of girth $g=3$.
Let  $C_3$ be its cycle in such a way that $V(C_3)=\{u_1,u_2,u_3\}$.
For every index $i\in[3]$, let $\deg(u_i)=d_i$.
Notice that, for every index $i\in[3]$, either $d_i=2$, or  $d_i=3$, $d_i\ge 4$.
If $d_i=3$, let $w_i$ denote the neighbor of $u_i$ not in $C_3$.

For every  $i\in[3]$, , whenever $d_i\ge 4$, let $T_i$ be the branching tree of $u_i$, i.e., the maximum subtree in $G$ containing $u_i$  such that  $V(T_i)\cap V(C_3)=\{u_i\}$.
For every index $i\in[3]$, whenever $d_i=3$, let $\hat{T}_i$ be the maximum subtree in $G$ containing $w_i$ such that   $V(\hat{T}_i)\cap V(C_3)=\emptyset$.

As an immediate consequence of Corollary \ref{tft}, we know that there is an ILD-set of  $T_i$ (resp., $\hat{T}_i$)    containing (resp., not containing) $u_i$ (resp., $w_i$).

We distinguish nine cases, assuming w.l.o.g. that $d_1 \le d_2 \le d_3$ and $d_3\ge3$.

\begin{figure}[hbt]
  \centering
        \includegraphics[width=0.8\textwidth]{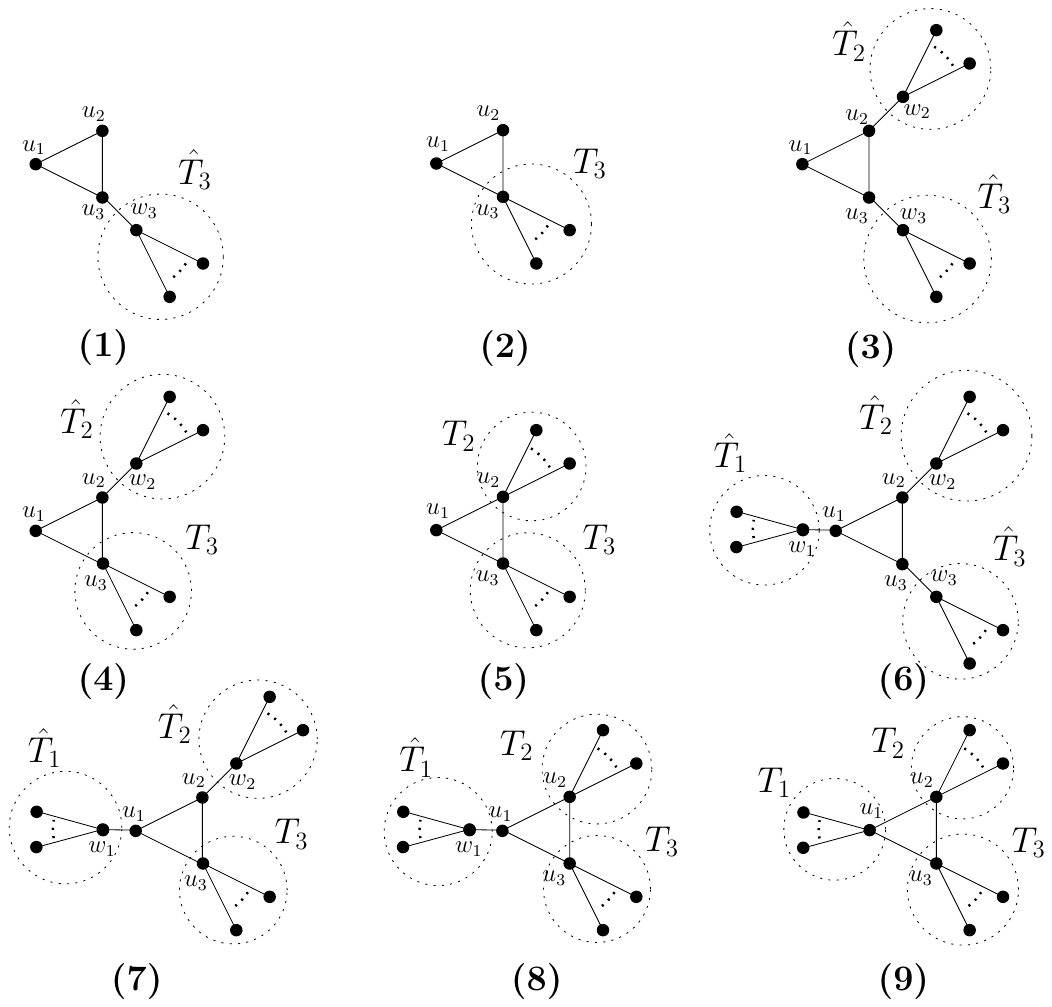}
  \caption{Nine twin-free unicyclic graphs of girth 3.}
  \label{fig14}
\end{figure}


\noindent
{\bf Case (1):} 
$d_1 = d_2=2$;  $d_3=3$. See Figure \ref{fig14} {\bf (1)}.
\newline
Take an ILD-set $\hat{S}_3$ of $\hat{T}_3$ containing vertex $w_3$.
Then, the set $S=\hat{S}_3\cup \{u_1\}$ is an ILD-set of $T$.


\noindent
{\bf Case (2):} 
$d_1 = d_2=2$;  $4 \le d_3$. See Figure \ref{fig14} {\bf (2)}.
\newline
Take an ILD-set $S_3$ of $T_3$ not containing vertex $u_3$.
Then, the set $S=S_3\cup \{u_1\}$ is an ILD-set of $T$.


\noindent
{\bf Case (3):} 
$d_1 = 2$;  $d_2=d_3=3$. See Figure \ref{fig14} {\bf (3)}.
\newline
Take an ILD-set $\hat{S}_2$ of $\hat{T}_2$ containing vertex $w_2$
and an ILD-set $\hat{S}_3$ of $\hat{T}_3$ containing vertex $w_3$.
Then, the set $S=\hat{S}_2\cup \hat{S}_3\cup \{u_1\}$ is an ILD-set of $T$.


\noindent
{\bf Case (4):} 
$d_1 =2$;  $d_2=3$;  $4 \le d_3$. See Figure \ref{fig14} {\bf (4)}.
\newline
Take an ILD-set $\hat{S}_2$ of $\hat{T}_2$ containing vertex $w_2$
and an ILD-set $S_3$ of $T_3$ containing vertex $u_3$.
Then, the set $S=\hat{S}_2\cup S_3$ is an ILD-set of $T$.


\noindent
{\bf Case (5):} 
$d_1 = 2$;  $4\le d_2\le d_3$. See Figure \ref{fig14} {\bf (5)}.
\newline
Take an ILD-set $S_2$ of $T_2$ not containing vertex $u_2$
andan ILD-set $S_3$ of $T_3$ not containing vertex $u_3$.
Then, the set $S=S_2\cup S_3\cup\{u_1\}$ is an ILD-set of $T$.


\noindent
{\bf Case (6):} 
$d_1 = d_2=d_3=3$. See Figure \ref{fig14} {\bf (6)}.
\newline
Take an ILD-set $\hat{S}_1$ of $\hat{T}_1$ not containing vertex $w_1$,
an ILD-set $\hat{S}_2$ of $\hat{T}_2$ containing vertex $w_2$
and an ILD-set $\hat{S}_3$ of $\hat{T}_3$ containing vertex $w_3$.
Then, the set $S=\hat{S}_1\cup\hat{S}_2\cup \hat{S}_3\cup \{u_1\}$ is an ILD-set of $T$.


\noindent
{\bf Case (7):} 
$d_1 = d_2=3$;  $4 \le d_3$. See Figure \ref{fig14} {\bf (7)}.
\newline
Take an ILD-set $\hat{S}_1$ of $\hat{T}_1$ not containing vertex $w_1$,
an ILD-set $\hat{S}_2$ of $\hat{T}_2$ containing vertex $w_2$
and an ILD-set $S_3$ of $T_3$ not containing vertex $u_3$.
Then, the set $S=\hat{S}_1\cup\hat{S}_2\cup S_3\cup \{u_1\}$ is an ILD-set of $T$.


\noindent
{\bf Case (8):} 
$d_1 = 3$;  $4\le d_2\le d_3$. See Figure \ref{fig14} {\bf (8)}.
\newline
Take an ILD-set $\hat{S}_1$ of $\hat{T}_1$ not containing vertex $w_1$.
an ILD-set $S_2$ of $T_2$ not containing vertex $u_2$
and an ILD-set $S_3$ of $T_3$ not containing vertex $u_3$.
Then, the set $S=\hat{S}_1\cup S_2\cup S_3\cup \{u_1\}$ is an ILD-set of $T$.


\noindent
{\bf Case (9):} 
$4\le d_1 \le d_2\le d_3$. See Figure \ref{fig14} {\bf (9)}.
\newline
Take an ILD-set $S_1$ of $T_1$ containing vertex $u_1$,
an ILD-set $S_2$ of $T_2$ not containing vertex $u_2$
and an ILD-set $S_3$ of $T_3$ not containing vertex $u_3$.
Then, the set $S=S_1\cup S_2\cup S_3$ is an ILD-set of $T$.
\end{proof}
\end{theorem}

\begin{itemize}

\item Non-ILD  unicyclic graphs of $g=3$ and order at most $n=10$.

\item Non-ILD  unicyclic graphs of $g=4$ and order at most $n=10$.

\item $\il(G) \le 2\gl(G) - 2$, for every twin-free unicyclic graph.

\item $\iota(G) \le \gl(G)$, for every twin-free unicyclic graph.

\end{itemize}

\section{Algorithms}\label{algo}

All the previous work allows us to get algorithms for finding an ILD-set for trees and unicyclic graphs, a linear one for the former and a quadratic time for the latter. 
The question of finding the minimum ILD-sets is still open.

We begin with the algorithm for trees.

\begin{algorithm}
\caption{Finding an ILD in a tree}
\label{algo1}
\begin{algorithmic}[1]
\Require A tree $T$.
\Ensure An ILD-set $S$.
\State Set $S:=\{\}$;
\While{there exists a subset of twins $D$ hanging from the vertex $u$}
\State $S:=S\cup D$;
\State Delete $D\cup \{u\}$ from $T$;
\EndWhile
\State Separate $T$ into its connected components, hence $T=\cup_{i=1}^k T_i$;
\State Find a 2-coloring of each $T_i$ and let $C_i$ be its smaller color class; 
\State $S:=S\cup (\cup_{i=1}^k C_i)$;
\end{algorithmic}
\end{algorithm}

\begin{theorem}
Algorithm~\ref{algo1} computes an ILD-set of a tree $T$ in linear time.
\end{theorem}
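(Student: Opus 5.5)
The argument has two parts: correctness (the output $S$ is independent and locating-dominating) and running time. For correctness, I would read ``a subset of twins $D$ hanging from $u$'' as the set of all leaves adjacent to a strong support vertex $u$, so $|D|\ge 2$. The invariant to maintain is this: after each iteration of the while loop, the vertices already put in $S$ are independent, and every deleted vertex is distinguished by $S$. Concretely, each deleted leaf lies in $S$. Each deleted support vertex $u$ is not in $S$, but it sees at least two vertices of $D\subseteq S$. No other vertex outside $S$ sees any vertex of $D$, since $N(D)=\{u\}$. Hence $N(u)\cap S\supseteq D$ is a set of size at least two that no other vertex can share. Independence is preserved because the vertices of $D$ are leaves with the common neighbour $u$, and $u$ itself is removed and never enters $S$.

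After the loop, the remaining forest $T'=\bigcup T_i$ has no strong support vertex, so each component $T_i$ is twin-free. Two caveats apply. First, trivial components (isolated vertices) and $K_2$ components need separate checking. An isolated vertex has no neighbour to dominate it, so it must go into $S$; I would say the 2-colouring puts it in its unique (smaller) colour class. A component $K_2$ is twin-free in the open-neighbourhood sense. Second, a deletion can create new twins, so the loop must be understood as iterating until none exist.

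For each nontrivial twin-free $T_i$, I apply Proposition \ref{bipild}: since $T_i$ is bipartite with no twins, either colour class is an ILD-set of $T_i$, and in particular the smaller one $C_i$ is. Then $S=\bigcup D\cup\bigcup C_i$ is independent. Inside each $T_i$ this holds by the proposition. Between different parts it holds because edges of $T$ joining different $T_i$, or joining $T_i$ to some $D$, all pass through deleted support vertices, which are not in $S$. For location, I would argue as follows:
\begin{itemize}
\item A vertex $x\in V(T_i)\setminus C_i$ has $N(x)\cap S\supseteq N_{T_i}(x)\cap C_i\neq\emptyset$. Any extra neighbours in $S$ could only come through edges to deleted vertices, and those are not in $S$, so in fact $N(x)\cap S=N_{T_i}(x)\cap C_i$.
\item These traces are pairwise distinct within $T_i$, and they are disjoint from the traces of vertices in other components.
\item They also differ from the traces of deleted support vertices $u$, which contain leaves of $D$.
\end{itemize}

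The main obstacle is the bookkeeping when one support vertex is deleted while its neighbours are later handled in another iteration or in a component. One has to ensure that no vertex outside $S$ is left undominated. This could happen if a vertex's only neighbour was a deleted support vertex: that vertex then becomes isolated in $T'$, and it is exactly the trivial-component case. Putting isolated vertices into $S$ resolves it, and they are independent of everything else in $S$, since all their original neighbours were deleted support vertices.

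For linear time, I would first compute all degrees. I would then keep a queue of support vertices with at least two leaf neighbours, updating the degrees of neighbours upon each deletion. Each vertex and edge is processed $O(1)$ times, so the loop runs in $O(n)$. Splitting into components and 2-colouring each by BFS is also $O(n)$, giving linear time overall.
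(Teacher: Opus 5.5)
Your route is the paper's: strip twin leaves together with their support vertex, apply Proposition~\ref{bipild} to the twin-free components, and count operations. You give far more detail than the paper does. However, the step that distinguishes a deleted support vertex $u$ rests on a false claim. The equality $N(D)=\{u\}$ holds only in the forest present when $D$ is removed, not in $T$. A vertex can become a leaf at $u$ precisely because its other neighbours were removed earlier as support vertices. Concretely, let $u'$ have leaves $a',b'$ and a neighbour $y$, let $y$ be adjacent to $u$, and let $u$ have leaves $a,b$. After $\{a',b',u'\}$ is removed, $D=\{a,b,y\}$ hangs from $u$, and $N_T(u')\cap S=\{a',b',y\}$ meets $D$. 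So ``no other vertex outside $S$ sees any vertex of $D$'' is false. Your invariant also overlooks that adding $D$ to $S$ enlarges the traces of previously deleted support vertices.

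The conclusion survives for a different reason. If some $v\neq u$ had $N(v)\cap S=N(u)\cap S\supseteq D$, then $u$ and $v$ would share two neighbours in $D$ (since $|D|\ge 2$), giving a $4$-cycle in $T$. The correct version of your observation is this: at the moment of removal, $u$ is the only neighbour of each vertex of $D$ among the vertices still present. That fact gives independence between different $D$'s and between the $D$'s and the final components. You also need it in your first bullet, where ``edges to deleted vertices, and those are not in $S$'' is inaccurate, since the deleted vertices of $D$ are in $S$.

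Finally, under your own reading ($D$ is all leaves at $u$), isolated vertices never arise when $n\ge 2$. A surviving vertex loses neighbours only when they are removed as support vertices, because if a neighbour is removed as a leaf, the vertex itself is that leaf's support and goes too. When its last neighbour $u$ is removed, the vertex is a leaf at $u$ and hence lies in $D$. So the ``main obstacle'' you describe does not occur, and no reinterpretation of ``smaller colour class'' is needed.
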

\begin{proof}
Clearly, if $T$ is a tree without twins, an ILD-set can be one of the color classes in a 2-coloring of the tree. 
It then  remains only to add, for each set of twins and its support vertex, the twins themselves to the ILD-set.

Regarding the algorithm's complexity, note that removing all twins from $T$ takes linear time. 
The subsequent steps, decomposing the tree into connected components and finding a 2-coloring for each component, also run in linear time.
\end{proof}

Now, we turn our attention to unicyclic graphs.

\begin{algorithm}[htbp]
\caption{Finding an ILD in a unicyclic graph without twins}
\label{algo2}
\begin{algorithmic}[1]
\Require A unicyclic graph $G$ without twins whose cycle is $C=\{u_1,\ldots ,u_n\}$ and $T_i$ is the maximal subtree in $G$ containing $u_i$.
\Ensure An ILD-set $S$ of $G$.
\State $S:=V(G)$; $\il=+\infty$;
\For {all each subtree $T_i$}
\State Find a 2-coloring of $T_i$ with color classes $C_0^i$ and $C_1^i$ where $u_i\in C_0^i$;
\EndFor
\For {each maximal independent set $I$ in $C$}
\State $\il^{\textrm{new}}:=\sum_{u_i\in I}|C_0^i|$;
\If{$\il^{\textrm{new}}<\il$}
\State $\il^{\textrm{new}}:=\il$;
\State $S:=\cup_{u_i\in I} C_0^i$;
\EndIf
\EndFor
\end{algorithmic}
\end{algorithm}

The following result checks the validity of the Algorithm~\ref{algo2}.

\begin{theorem}
Algorithm~\ref{algo2} computes an ILD-set of a unicyclic graph $G$ without twins and the whole process takes time $O(n^2)$ in the worst case.
\end{theorem}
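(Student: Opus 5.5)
The plan is to prove two things separately: that the set returned by Algorithm~\ref{algo2} is an ILD-set of $G$, and that the running time is $O(n^2)$. Correctness will follow the strategy of Theorem~\ref{th:unicyclic}, with the local case analysis on the cycle replaced by the exhaustive search over independent subsets of the cycle.

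\textbf{Bipartite trees and the candidate sets.} Each branching tree $T_i$ is bipartite. Reading the algorithm's final choice as $S=\bigcup_{u_i\in I}C_0^i\cup\bigcup_{u_i\notin I}C_1^i$, independence is immediate. Inside each $T_i$ only one color class is used. The only edges joining different trees are cycle edges $u_iu_{i+1}$. Since $u_i\in C_0^i$ and $I$ is independent on the cycle, no cycle edge has both endpoints in $S$.

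\textbf{Location-domination.} This is where Corollary~\ref{tft} and Proposition~\ref{bipild} enter, applied to each $T_i$ (or, when $u_i$ has degree $3$, to $\hat T_i$ rooted at $w_i$). A vertex $x\notin S$ lying in $T_i\setminus\{u_i\}$ has all its neighbors in $T_i$. Its trace $N(x)\cap S$ is therefore its trace in the chosen class of $T_i$, which is nonempty and distinct from the traces of the other vertices of $T_i$ by twin-freeness (Proposition~\ref{bipild}). Traces of vertices in different trees are disjoint unless they share a cycle vertex. The remaining conflicts involve only cycle vertices $u_i\notin I$, whose traces may pick up $u_{i\pm1}$. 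Maximality of $I$ guarantees that each such $u_i$ is dominated.

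\textbf{Main obstacle.} The hard part is separating a cycle vertex $u_i\notin I$ from a vertex of $T_i$, or from a neighbor $u_{i\pm1}\notin I$. This requires exactly the degree case analysis of Theorem~\ref{th:unicyclic}: for $d_i=2$, $d_i=3$, and $d_i\ge4$, one decides whether $u_i$ should be in the set. I would not prove this by re-deriving all nine cases. Instead I would show that among all maximal independent sets $I$ of $C$, at least one reproduces the choices made in the case analysis of Theorem~\ref{th:unicyclic} (for $g=3$), or the bipartition class of $G$ (for $g\ge4$ even, by Proposition~\ref{bipild}, and for $g\ge5$ by Theorem~\ref{ildg5}). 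Since the loop keeps some candidate, it then suffices to argue that the algorithm only accepts candidates that are ILD-sets. This calls for an explicit ILD check inside the loop. If the pseudocode lacks that check, I would add it and note that it costs $O(n)$ per candidate.

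\textbf{Complexity.} The 2-colorings of all $T_i$ together take $O(n)$. For each candidate $I$, computing $\sum|C_0^i|$ and verifying the ILD property takes $O(n)$. The number of candidates must then be bounded by $O(n)$, and this requires care. If the loop ranges over all maximal independent sets of $C_g$, there are exponentially many (Perrin-number many). To achieve $O(n^2)$, I would restrict the enumeration to sets determined by a start vertex: starting at each $u_j$, greedily take every other cycle vertex and fix parity at the end. This gives $O(g)=O(n)$ candidates and $O(n^2)$ total time. I would then show that this restricted family still contains the set produced by Theorem~\ref{th:unicyclic}. For girth $3$ the candidates are just the three singletons. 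For larger $g$, the alternating pattern of the bipartition is among them.
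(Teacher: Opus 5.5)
Your repairs to the algorithm are justified. Read literally, the output $\bigcup_{u_i\in I}C_0^i$ leaves every vertex of a nontrivial $T_j$ with $u_j\notin I$ undominated. The paper's own argument also rests on two false claims. First, it claims every maximal independent set of $C$ is an ILD-set, but $\{u_1,u_4\}$ in $C_6$ gives $u_2$ and $u_6$ the same trace $\{u_1\}$. Second, it counts $2$ or $g$ such sets, which is the count of \emph{maximum} independent sets.

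With your explicit ILD check and the $O(g)$ candidate family, correctness reduces to one existence lemma: for some $I$ in your family, $S_I=\bigcup_{u_i\in I}C_0^i\cup\bigcup_{u_i\notin I}C_1^i$ is an ILD-set of $G$. Your proposal does not establish this lemma, and neither of your two sources supplies it.

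\textbf{Odd $g\ge5$.} Theorem~\ref{ildg5} only says that maximum independent sets of $G$ are ILD-sets. Such a set need not be of the form $S_I$, since its restriction to a branching tree need not be a color class. For odd $g$ there is also no bipartition of $G$ to fall back on.

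\textbf{$g=3$.} Matching the case analysis of Theorem~\ref{th:unicyclic} is not enough, because that analysis is itself defective. Cases (1) and (2) cannot occur, since two degree-2 vertices of a triangle are true twins. Case (4) fails whenever $u_3$ has a pendant leaf $\ell$: for any independent set containing $u_3$, both $u_1$ and $\ell$ have trace $\{u_3\}$. For a concrete instance, take the triangle $u_1u_2u_3$ with a leaf $w_2$ at $u_2$, a leaf $\ell$ at $u_3$, and a pendant path $u_3mm'$. The Case (4) set is $S_{\{u_3\}}=\{w_2,u_3,m'\}$, which is not locating. $S_{\{u_2\}}$ also fails, since $u_1$ and $w_2$ both see only $u_2$. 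Only $S_{\{u_1\}}$ works.

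The missing lemma can be proved uniformly.
\begin{enumerate}
\item For odd $g$, a maximum independent set $I$ of $C_g$ leaves exactly one cycle edge $e=u_au_{a+1}$ with both ends outside $I$. Then $S_I$ is precisely the color class of the spanning tree $G-e$ that avoids $u_a$ and $u_{a+1}$.
\item Since $e$ joins two vertices outside $S_I$, every $x\notin S_I$ satisfies $N_G(x)\cap S_I=N_{G-e}(x)\neq\emptyset$. Hence $S_I$ is an ILD-set of $G$ if and only if no two leaves of $G-e$ outside $S_I$ share a support vertex.
\item Because $G$ is twin-free, this fails only if $\deg(u_a)=2$ and $u_{a-1}$ is a support vertex of $G$, or $\deg(u_{a+1})=2$ and $u_{a+2}$ is a support vertex of $G$. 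Call $e$ \emph{bad} in that case.
\item Suppose every cycle edge were bad. Then the degree-2 cycle vertices form a vertex cover of $C_g$, so, as $g$ is odd, they include two adjacent vertices $u_b,u_{b+1}$. Since $u_b$ is not a support vertex, badness of $u_{b+1}u_{b+2}$ forces $\deg(u_{b+2})=2$. Propagating around the cycle gives $G=C_g$, which has no support vertices and hence no bad edges, a contradiction.
\item For even $g$, the alternating $I$ makes $S_I$ a bipartition class of $G$, which is an ILD-set by Proposition~\ref{prop:bipartite}.
\end{enumerate}
With this lemma replacing your appeals to Theorems~\ref{ildg5} and~\ref{th:unicyclic}, your complexity count then goes through. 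The lemma also gives a correct proof of Theorem~\ref{th:unicyclic}.
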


\begin{proof}
The algorithm's correctness is established as follows. By Theorem~\ref{th:unicyclic}, an ILD-set for a unicyclic graph $G$ without twins consists of an ILD-set for each subtree $T_i$ and an ILD-set for the cycle $C$. For each subtree $T_i$, an ILD-set is given by one color class of its 2-coloring, since these subtrees contain no twins. For the cycle $C$, any maximal independent set is also an ILD-set. The algorithm exhaustively checks all valid combinations of these subsets.

The algorithm contains two main loops. The first iterates over the graph's subtrees, which are linear in number. The second iterates over the distinct maximal independent sets of $C$. Note that if $C$ is even, there are only two such sets; if $C$ is odd, there are $n$ maximal independent sets, where $n$ is the length of the cycle.

Finally, finding a 2-coloring for a tree requires linear time, as every tree is bipartite.
\end{proof}

\section{Conclusions, further work and acknowledgement}\label{cfw}

For general graphs, it was proved that there exist non-ILD graphs for any order $n\geq 9$. 
Focusing on trees, an improved upper bound for $\il$ both for the general case as well as for a special class of trees are given. 
In both situations, there are examples for the whole range of values of $\il$. 
The existence of ILD-sets in bipartite and unicyclic graphs has also been  established, however the problem remains open for other graph families. Proposition~\ref{prop:bipartite} about bipartite graphs is crucial to obtain the algorithms for finding ILD-sets both in trees and in unicyclic graphs.

There are plenty of work remaining in this area of study. As a matter of example, similarly as in Theorem~\ref{iogl}, it would be very interesting to characterize those trees where the bound established in Theorem~\ref{iogltrees} is reached.

\begin{conj}
Characterize those trees $T$ where $\iota(T)= \gl(T)$.
\label{conj1}
\end{conj}

Moreover, no characterization or other results on the ILD-sets of two big graph families such as bipartite graphs of girth 4 with twins, and twin-free graphs of girth 4 have been obtained yet. 
Finally, one can combine the notion of independence with other graph locations. 
Thus, we think that it is of interest to study independent identifying dominating sets (recall that and identifying-dominating set $D$ verifies that for every pair $u,v\in V(G)$: $N[u]\cap D \neq N[v]\cap D$), or independent metric-locating domination sets where $D$ is metric-locating-dominating if it is dominating and for every pair $u,v\in V(G)$ there exists $x\in D$ such that $d(u,x)\neq d(v,x)$.

The authors would like to thank M. Chellali for pointing out the proof of Theorem~\ref{iogltrees}.

\end{document}